\newtheorem{theorem}{Theorem}
\theoremstyle{plain}
\newtheorem{lemma}[theorem]{Lemma}
\newtheorem{proposition}[theorem]{Proposition}
\newtheorem{corollary}[theorem]{Corollary}
\theoremstyle{definition}
\theoremstyle{remark}
\newtheorem{remark}[theorem]{Remark}
\newtheorem{example}[theorem]{Example}
\newcommand{\Z}{\mathbb{Z}}
\newcommand{\F}{\mathbb{F}}
\newcommand{\ds}{\displaystyle}
\begin{document}

\title{Homology Operations in Symmetric Homology}
\author{Shaun V. Ault}
\email{svault@valdosta.edu}
\address{Department of Mathematics and Computer Science,
         Valdosta State University, 
         1500 N. Patterson St.
         Valdosta, Georgia, 31698, 
         USA.}


\keywords{symmetric homology, cyclic homology, homology operations,
  $E_{\infty}$ algebra.}

\begin{abstract}  
  The symmetric homology of a unital associative algebra $A$ over a
  commutative ground ring $k$, denoted $HS_*(A)$, is defined using
  derived functors and the symmetric bar construction of Fiedorowicz.
  In this paper we show that $HS_*(A)$ admits homology operations and
  a Pontryagin product structure making $HS_*(A)$ an associative
  commutative graded algebra.  This is done by finding an explicit
  $E_{\infty}$ structure on the standard chain groups that compute
  symmetric homology.
\end{abstract}

\maketitle

\section{Introduction}\label{sec.intro}                        

The purpose of this paper is to define an $E_{\infty}$ structure on
the standard chain groups that compute symmetric homology of a unital
associative algebra.  The construction makes use of the fact that the
symmetric category $\Delta S_+$ (that is, $\Delta S$ with an initial
object appended) is permutative, a property not shared by the
simplicial category $\Delta$ nor the cyclic category $\Delta C$, even
if initial objects are appended.  Such structure may facilitate
computations of symmetric homology, which in turn may shed light on
related functor homology theories.

The notion of symmetric homology was introduced under the broader
context of crossed simplicial groups (CSGs) by Fiedorowicz and Loday
in~\cite{FL}.  Some important properties and results were developed in
the preprints of Fiedorowicz~\cite{F} and Ault-Fiedorowicz~\cite{AF},
as well as in the author's thesis, a portion of which has been
published~\cite{A}.  Symmetric homology can be thought of as an analog
to cyclic homology, in which the symmetric groups play the role that
the cyclic groups do in the latter.  The usefulness of cyclic
(co)homology in noncommutative geometry and $K$-theory is well
established (see for example,~\cite{Connes,Burgh,BurghFied}). It
becomes natural to examine generalizations such as symmetric homology
in order to better understand cyclic homology itself.  Moreover, these
generalizations are important in their own right.  For example, there
are interesting links between symmetric homology and $\Gamma$-homology
and related theories through the identification of $\Delta S$ with the
category of {\it noncommutative sets}, $\mathcal{F}(as)$ (see
\S\ref{sub.noncomm_sets}).  Furthermore, symmetric homology is related
to stable homotopy theory in the following way: if $G$ is a group, the
symmetric homology of the group ring $k[G]$ is isomorphic to
$H_*(\Omega\Omega^{\infty} S^{\infty}(BG); k)$~\cite{F,A}.

This paper, together with~\cite{A}, is intended to supplant the
unpublished preprints of Fiedorowicz~\cite{F} and
Ault-Fiedorowicz~\cite{AF}.

\subsection{Acknowledgments.}

The author is grateful for the mentorship of his thesis advisor
Zbigniew Fiedorowicz, with whom he worked closely (as his
Ph.~D.~student at Ohio State) to develop key results of this paper.
The author is also indebted to the anonymous referees of previous
versions of this paper for many helpful suggestions, and to certain
faculty members at Fordham University for editorial guidance.

\subsection{Symmetric Homology}

We begin by recalling some of the notations and definitions regarding
symmetric homology found in~\cite{A}.  Let $A$ be a unital associative
algebra over a commutative ground ring $k$, and let $k$-$\mathbf{Mod}$
be the category of (left) $k$-modules.  Let $\Delta S$ be the category
whose objects are the sets $[n] = \{0, 1, 2, \ldots, n\}$ for $n \geq
0$ and whose morphisms $[n] \to [m]$ are pairs $(\phi, \gamma)$ such
that $\phi$ is a non-decreasing set map $[n] \to [m]$ (that is, $\phi
\in \Delta([n],[m])$), and $\gamma \in \Sigma^{\mathrm{op}}_{n+1}$
(the opposite of the symmetric group).  The category $\Delta S$ is the
structure category of the {\it symmetric CSG}~\cite{FL,L}.  Briefly, a
CSG is a sequence of groups $\{G_n\}_{n \geq 0}$ together with a
structure category $\Delta G$ such that:
\begin{itemize}
  \item $\Delta G$ contains the simplicial category $\Delta$ as
    subcategory,
  \item $\mathrm{Aut}_{\Delta G}([n]) = G_n^{\mathrm{op}}$, and
  \item Each morphism of $\Delta G$ has unique decomposition into
    $\phi \circ \gamma$, which we denote by the pair $(\phi, \gamma)$,
    with $\phi \in \Delta$ and $\gamma \in G_n$ for some $n$.
\end{itemize}
Composition in $\Delta G$ is defined by $(\phi, \gamma) \circ (\psi,
\delta) = (\phi \circ \psi^{\gamma}, \gamma^{\psi} \cdot \delta)$ for
the appropriate morphisms $\psi^{\gamma}$ of $\Delta$ and
${\gamma}^{\psi} \in G^{\mathrm{op}}$.  As implied by the notation, a
single dot ($\cdot$) is used for multiplication in $G^{\mathrm{op}}$,
however it is convenient to regard the group elements as living in $G$
so that we typically do the multiplication ``the right way'' when
writing the morphism: $(\phi, \gamma) \circ (\psi, \delta) =
(\phi\psi^{\gamma}, \delta \gamma^{\psi})$.  See~\cite{FL,A} for more
details and notational conventions.  Observe, both $\Delta$ and the
cyclic category $\Delta C$ are examples of structure categories of
CSGs, the former having trivial automorphism groups and the latter
having $\mathrm{Aut}_{\Delta C}([n]) = C_{n+1}^{\mathrm{op}} =
C_{n+1}$, the cyclic group of order $n+1$.  Using an appropriate bar
construction, one may define a homology theory associated to a CSG.
Indeed the cyclic bar construction of Loday~\cite{L}, a contravariant
functor $B_*^{cyc}A \colon\Delta C \to \textrm{$k$-$\mathbf{Mod}$}$,
defines the cyclic homology of the algebra $A$ via $HC_*(A) =
\mathrm{Tor}_*^{\Delta C^{\mathrm{op}}}(\underline{k}, B_*^{cyc}A) =
\mathrm{Tor}_*^{\Delta C}(B_*^{cyc}A, \underline{k})$, where
$\underline{k}$ is the trivial cyclic $k$-module, that is,
$\underline{k}[n] = k$ for all $n \geq 0$ and $\underline{k}\alpha =
\mathrm{id}$ for all morphisms $\alpha$.  Fiedorowicz and
Loday~\cite{FL} found that any definition of symmetric homology using
a contravariant bar construction results in a trivial theory -- that
is, if $M$ is a $\Delta S^{\mathrm{op}}$-module, then
$\mathrm{Tor}_*^{\Delta S}(M, \,\underline{k}) = H_*(M)$, the homology
of the underlyling simplicial module.  On the other hand,
Fiedorowicz~\cite{F} discovered that the {\it covariant} bar
construction, rather than a contravariant one, yields an interesting
non-trivial theory of symmetric homology.
\begin{equation}\label{eqn.symbar}
  \begin{array}{c}
  B_*^{sym}A \colon\Delta S \to \textrm{$k$-$\mathbf{Mod}$},\\
  B_*^{sym}A[n] = B_n^{sym}A \stackrel{def}{=} A^{\otimes (n + 1)}.
  \end{array}
\end{equation}
The functor $B_*^{sym}A$ is referred to as $C^{sym}(A)$ in~\cite{L}.
It is sufficient to define $B_*^{sym}A$ on $\gamma \in
\Sigma_{n+1}^{\mathrm{op}}$ and $\phi \in \Delta([n], [m])$.  We often
refer to the morphism $B_*^{sym}A\alpha$ as {\it evaluation at
  $\alpha$}.
\begin{eqnarray}
  \label{eqn.eval_g_1}
  B_*^{sym}A\gamma \colon A^{\otimes(n+1)} &\longrightarrow&
  A^{\otimes(n+1)} \\
  \label{eqn.eval_g_2}
  a_0 \otimes a_1 \otimes \cdots \otimes a_n &\mapsto& a_{\gamma(0)}
  \otimes a_{\gamma(1)} \otimes \cdots \otimes a_{\gamma(n)}.
\end{eqnarray}
\begin{eqnarray}
  \label{eqn.eval_phi_1}
  B_*^{sym}A\phi \colon A^{\otimes(n+1)} &\longrightarrow&
  A^{\otimes(m+1)} \\
  \label{eqn.eval_phi_2}
  a_0 \otimes a_1 \otimes \cdots \otimes a_n &\mapsto& b_0 \otimes b_1
  \otimes \cdots \otimes b_m, \quad \textrm{where} \\ b_i &=&
  \prod_{a_j \in \phi^{-1}(i)} a_j, \quad \textrm{(product taken in
    order of increasing indices)}
\end{eqnarray}

We define the {\it symmetric homology} of any $\Delta S$-module $M$ by
$HS_*(M) = \mathrm{Tor}_*^{\Delta S}( \underline{k}, M )$, in which
$\underline{k}$ is the trivial $\Delta S^{\mathrm{op}}$-module.  For
any unital associative algebra $A$, $B_*^{sym}A$ is a $\Delta
S$-module, and so we define the symmetric homology of $A$ as follows:
\begin{equation}
  HS_*(A) \stackrel{def}{=} \mathrm{Tor}_*^{\Delta
    S}\left(\underline{k}, B_*^{sym}A\right).
\end{equation}

It is advantageous to enlarge $\Delta S$ by adding an initial object
$[-1] \in \Delta S_+$.  Define the extended symmetric bar
construction, $B_*^{sym_+}A$, by $B_n^{sym_+}A = B_n^{sym}A$ for $n
\geq 0$ and $B_{-1}^{sym_+}A = k$.  Evaluation at the unique morphism
$[-1] \to [n]$ sends $1 \in k$ to $1^{\otimes(n+1)}$.  The author has
shown~\cite{A} that symmetric homology also can be computed using
$\Delta S_+$.  In~(\ref{eqn.extendedHS}), $\underline{k}$ is the
trivial $\Delta S_+^{\mathrm{op}}$-module.
\begin{equation}\label{eqn.extendedHS}
  HS_*(A) \cong \mathrm{Tor}_*^{\Delta S_+}\left( \underline{k},
  B_*^{sym_+}A \right).
\end{equation}
We may use a standard resolution based on under-categories to compute
the Tor groups.  Recall, for a small category $\mathscr{C}$, there is
a contravariant functor $- \setminus \mathscr{C}$ from $\mathscr{C}$
to $\mathbf{Cat}$ (the category of small categories), which takes an
object $c$ to the under-category $c \setminus \mathscr{C}$; in other
words, $- \setminus \mathscr{C}$ is a
$\mathscr{C}^{\mathrm{op}}$-category.  Using the notation
$N\mathscr{C}$ for the {\it nerve} of a small category $\mathscr{C}$,
and the useful notation of Gabriel-Zisman~\cite{GZ}, a simplicial
$k$-module whose homology is exactly $HS_*(A)$ is written and defined
as follows:
\begin{equation}\label{eqn.GZ-chain}
  C_*(\Delta S_+, B_*^{sym_+}A) \stackrel{def}{=} k\left[N(- \setminus
    \Delta S_+)\right] \otimes_{\Delta S_+} B_*^{sym_+}A.
\end{equation}
That is, 
\begin{equation}\label{eqn.GZ-HS(A)}
  HS_*(A) \cong H_*(C_*(\Delta S_+, B_*^{sym_+}A)).
\end{equation}

\section{Preliminaries}\label{sec.prelim}                        

\subsection{Notational Conventions}\label{sub.notation}
With an eye towards readability, we use the following notational
conventions:
\begin{enumerate}
  \item {\it Tuple of $n$ items:} $\mathbf{m} \stackrel{def}{=} (m_1,
    m_2, \ldots, m_n)$.  Each element $m_i$ may be a number or an
    element of some set as context dictates.  The number of elements,
    $n$, is suppressed in the notation, though it will always be clear
    what $n$ is by context.
  \item {\it ``Single-variable'' function applied to a tuple:} If $f
    \colon M \to N$ and $\mathbf{m} \in M^n$, then
    \begin{equation}
      f(\mathbf{m}) \stackrel{def}{=} (f(m_1), f(m_2), \ldots, f(m_n))
      \in N^n.
    \end{equation}
  \item {\it ``Multi-variable'' function applied to a tuple:} If $f
    \colon M^p \to N^q$, then we simply write the image of $\mathbf{m}
    \in M^p$ under $f$ as $f(\mathbf{m}) \in N^q$.
  \item {\it Permutation applied to a tuple:} If $\sigma \in \Sigma_n$,
    then 
    \begin{equation}
      \sigma\mathbf{m} \stackrel{def}{=} (m_{\sigma^{-1}(1)}, \ldots,
      m_{\sigma^{-1}(n)}).
    \end{equation}
    This convention ensures that $\Sigma_n$ acts on the {\it left} of
    $\mathbf{m}$.
  \item {\it Block permutation:} If $\sigma \in \Sigma_n$, then
    $\sigma_{\mathbf{k}} = \sigma_{k_1,\ldots, k_n} \in
    \Sigma_{k_1+\cdots + k_n}$ represents the {\it block
      transformation} of blocks of sizes $k_1, k_2, \ldots k_n$ (where
    each $k_i \in \mathbb{N} \cup \{0\}$).  For example, $(1,2)_{2,3}
    = (1,4,2,5,3)$.
  \item {\it Inter-block permutation:} If $\sigma_i \in \Sigma_{k_i}$
    for each $i=1, 2, \ldots, n$, then $\sigma_1 \oplus \cdots \oplus
    \sigma_n \in \Sigma_{k_1+\cdots+k_n}$ represents the permutation
    of block $i$ by $\sigma_i$ while retaining the original order of
    the blocks.  For example, $(1,2) \oplus (1,2,3) = (1,2)(3,4,5)$.
  \item {\it Products of tuples:} Suppose $c_i$ are objects of a
    category $\mathscr{C}$ with an associative binary operation
    $\odot$.  Then $\mathbf{c}^{\odot} \stackrel{def}{=} c_1 \odot c_2
    \odot \cdots \odot c_n$.  Moreover, if $\sigma \in \Sigma_n$, then
    $\sigma\mathbf{c}^{\odot} \stackrel{def}{=} c_{\sigma^{-1}(1)}
    \odot \cdots \odot c_{\sigma^{-1}(n)}$.
  \item If there is a specified left action of $\Sigma_n$ on a set
    $X$, then the notation $\sigma \bullet x$ denotes the image of $x
    \in X$ under the action of $\sigma \in \Sigma_n$.  The same
    notation is used for right actions, only written the opposite way
    around: $x \bullet \sigma$.  This notation is chosen so that there
    is a clear distinction between the similar notations $\sigma
    \mathbf{c}^{\odot}$ and $\sigma \bullet \mathbf{c}^{\odot}$.
\end{enumerate}

\subsection{Monoid Algebras and the Functor $\mathcal{T}$}\label{sub.monoids}
Let $\mathbf{Mon}$ be the category of monoids and monoid homomorphisms
(here, we mean {\it ordinary monoids in sets}).  For a given monoid
$M$, we define the extended symmetric bar construction,
\begin{equation}\label{eqn.symbar_monoid}
  \begin{array}{c}
  B^{sym_+}M \colon\Delta S_+ \to \mathbf{Mon},\\ B^{sym_+}M[n] 
  \stackrel{def}{=} M^{n+1}
  \end{array}
\end{equation}
Here, $M^n$ is the cartesian product of $n$ copies of $M$, and $M^0 =
\{ () \}$, a set containing just the empty tuple.  Now let us define a
similar notation as that of~(\ref{eqn.GZ-chain}) for simplicial
monoids.  If $F$ is a $\Delta S_+$-monoid, {\it i.e.}  $F : \Delta S_+
\to \mathbf{Mon}$ is a functor, then let
\begin{equation}\label{eqn.GZ-set}
  C(\Delta S_+, F) \stackrel{def}{=} N( - \setminus \Delta S_+)
  \times_{\Delta S_+} F.
\end{equation}
We define the symmetric homology (with coefficients in $k$) of the
monoid $M$ by:
\begin{equation}\label{eqn.GZ-simplicialset}
  HS_*(M) \stackrel{def}{=} H_*( k[C(\Delta S_+, B^{sym_+}M)] ).
\end{equation}
See~\cite{A}, \S 5.2, for more details on the symmetric bar
construction for monoids.

\begin{remark}
  We will generally use the notation $\langle f, \mathbf{m} \rangle$
  in place of $B^{sym_+}_*Mf(\mathbf{m})$ order to denote {\it
    evaluation} of $f$ at $\mathbf{m}$.  Because $B_*^{sym_+}M$ is
  functorial, the evaluation map satisfies the useful property,
  \begin{equation}\label{eqn.eval_prop}
    \langle fg, \mathbf{m} \rangle = \big\langle f, \langle g,
    \mathbf{m} \rangle \big\rangle.
  \end{equation}
  Similarly, for $a_0 \otimes \cdots \otimes a_n \in
  A^{\otimes(n+1)}$, we may write $\langle f, a_0 \otimes \cdots
  \otimes a_n \rangle$ in place of $B^{sym_+}_*Af(a_0 \otimes \cdots
  \otimes a_n)$.
\end{remark}

Let $\mathcal{T}$ be the functor from $\mathbf{Mon}$ to the category
of small categories defined by sending a monoid $M$ to the category
$\mathcal{T}M$ whose objects are finite sequences of elements of $M$,
including the empty sequence, $()$.  Morphisms of $\mathcal{T}M$
consist of pairs $(f, \mathbf{m})$ such that $\mathbf{m} = (m_1,
\ldots, m_p) \in M^p$ and $f \colon [p-1] \to [q-1]$ is a morphism of
$\Delta S_+$.  The source and target of such a pair are $\mathbf{m}$
and $\langle f, \mathbf{m}\rangle$, respectively. When the source and
target are clear, we simply use $f$ to denote the morphism.  The
functor $\mathcal{T}$ sends a monoid morphism $\psi \colon M \to N$ to
the functor $\mathcal{T}\psi \colon \mathcal{T}{M} \to \mathcal{T}{N}$
that maps $\mathbf{m} \in M^p$ to $\psi(\mathbf{m}) \in N^p$.

\begin{lemma}\label{lem.TM_permutative}
  $\mathcal{T}M$ is a permutative category.
\end{lemma}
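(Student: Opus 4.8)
The plan is to exhibit $\mathcal{T}M$ as a strict symmetric monoidal category by defining the monoidal product on objects as concatenation of sequences and verifying that the symmetry isomorphisms, implemented by the permutation morphisms of $\Delta S_+$, satisfy the permutative axioms strictly. First I would define $\mathbf{m} \odot \mathbf{n}$ on objects to be the concatenated sequence $(m_1,\ldots,m_p,n_1,\ldots,n_q)$, with the empty sequence $()$ serving as the strict two-sided unit; associativity on objects is then literally the associativity of concatenation. On morphisms, given $(f,\mathbf{m}) \colon \mathbf{m} \to \langle f,\mathbf{m}\rangle$ and $(g,\mathbf{n}) \colon \mathbf{n} \to \langle g, \mathbf{n}\rangle$, I would define $(f,\mathbf{m}) \odot (g,\mathbf{n})$ using the morphism of $\Delta S_+$ that acts as $f$ on the first block of coordinates and as $g$ on the second — concretely, writing $f = (\phi,\gamma)$ and $g = (\psi,\delta)$ in the canonical decomposition, the product morphism has simplicial part $\phi \oplus \psi$ (the obvious order-preserving map on the disjoint union of the intervals) and symmetric part $\gamma \oplus \delta$ in the block/inter-block notation set up in \S\ref{sub.notation}. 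One then checks that evaluation respects this: $\langle f \odot g, \mathbf{m}\odot\mathbf{n}\rangle = \langle f,\mathbf{m}\rangle \odot \langle g,\mathbf{n}\rangle$, which follows from the explicit formulas (\ref{eqn.eval_g_2}) and (\ref{eqn.eval_phi_2}) since $f$ only sees the first block and $g$ only the second.

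Next I would define the symmetry $c_{\mathbf{m},\mathbf{n}} \colon \mathbf{m}\odot\mathbf{n} \to \mathbf{n}\odot\mathbf{m}$ to be the morphism of $\mathcal{T}M$ whose $\Delta S_+$-part is the block transposition $(1,2)_{p,q}$ acting on $\mathbf{m}\odot\mathbf{n}$; by definition of the $\Delta S$-action on $B^{sym_+}M$ this permutation morphism sends $\mathbf{m}\odot\mathbf{n}$ exactly to $\mathbf{n}\odot\mathbf{m}$, so the morphism is well-defined with the stated source and target. The remaining work is the verification of the strict permutative category axioms: that $c$ is natural in both variables, that $c_{\mathbf{n},\mathbf{m}} \circ c_{\mathbf{m},\mathbf{n}} = \mathrm{id}$, that $c_{\mathbf{m},()} = \mathrm{id}_{\mathbf{m}}$, and the hexagon/coherence identity $c_{\mathbf{m},\mathbf{n}\odot\mathbf{p}} = (\mathrm{id}_{\mathbf{n}} \odot c_{\mathbf{m},\mathbf{p}}) \circ (c_{\mathbf{m},\mathbf{n}} \odot \mathrm{id}_{\mathbf{p}})$. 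Each of these reduces to an identity among block permutations in the relevant symmetric groups: involutivity is $((1,2)_{p,q})^{-1}$ acting appropriately, the unit axiom is $(1,2)_{p,0} = \mathrm{id}$, and the hexagon is the standard decomposition of the block transposition $(1,2)_{p,q+r}$ as a composite of $(1,2)_{p,q}\oplus\mathrm{id}_r$-type block permutations, all of which hold on the nose in $\Sigma_{p+q+r}$.

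Naturality of $c$ is where I would be most careful, since it requires checking that for morphisms $(f,\mathbf{m})$ and $(g,\mathbf{n})$ the square relating $c_{\mathbf{m},\mathbf{n}}$ and $c_{\langle f,\mathbf{m}\rangle, \langle g,\mathbf{n}\rangle}$ commutes in $\mathcal{T}M$; this unwinds to the composition rule in $\Delta S_+$, namely that conjugating a product morphism $(\phi\oplus\psi, \gamma\oplus\delta)$ by the block transposition yields $(\psi\oplus\phi,\delta\oplus\gamma)$, which is again a computation with the block and inter-block permutation conventions — the key identity being that $(1,2)_{\bullet}$ intertwines $\sigma_1\oplus\sigma_2$ with $\sigma_2\oplus\sigma_1$ after the appropriate re-blocking. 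I expect this naturality/compatibility-with-composition check to be the main obstacle, not because it is deep but because it requires pinning down exactly how the $\Delta S_+$ composition law $(\phi,\gamma)\circ(\psi,\delta) = (\phi\psi^\gamma, \gamma^\psi\delta)$ interacts with the $\oplus$ and block-permutation operations; once the bookkeeping conventions from \S\ref{sub.notation} are deployed consistently, everything is forced and strict, giving the permutative (rather than merely symmetric monoidal) structure asserted in the lemma.
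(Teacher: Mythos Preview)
Your proposal is correct and follows essentially the same approach as the paper: concatenation for the monoidal product, the empty sequence as strict unit, and block transposition for the symmetry. The paper's proof is considerably shorter because it simply cites that $\Delta S_+$ is already known to be permutative (from~\cite{A}) and inherits the product of morphisms and symmetry directly from there, whereas you propose to verify the permutative axioms from scratch; your detailed checks amount to reproving that cited fact.
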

\begin{proof}
  Define the product on objects by concatenation:
  \begin{equation}
    (m_1, \ldots, m_p) \odot (n_1, \ldots, n_q) \stackrel{def}{=}
    (m_1, \ldots, m_p, n_1, \ldots, n_q).
  \end{equation}
  Since $\Delta S_+$ is permutative~\cite{A}, we can use the product
  of $\Delta S_+$ to define products of morphisms in $\mathcal{T}M$.
  Associativity is strict, since it is induced by the associativity of
  $\odot$ in $\Delta S_+$.  The empty sequence, $()$, is a strict
  unit.  The symmetry transformation is defined on objects by block
  transposition.
\end{proof}

\subsection{The Category of Noncommutative Sets}\label{sub.noncomm_sets}

There is an interpretation of the morphisms of $\Delta S_+$ as formal
tensors, which provides an interesting connection to the category
$\mathcal{F}(as)$, the category of {\it noncommutative
  sets}~\cite{P,PR,Richter}.  The objects of $\mathcal{F}(as)$ are the
finite sets $\underline{m} = \{1, 2, 3, \ldots, m\}$ for $m \geq 1$.
A morphism $\lambda$ of $\mathcal{F}(as)$ is a set map
$\underline{\lambda} \colon\underline{m} \to \underline{n}$ together
with a specified total ordering $<_{\lambda}$ on each preimage set
$\underline{\lambda}^{-1}(i)$, $1 \leq i \leq n$.

Let $X = \{x_0, x_1, x_2, x_3, \ldots\}$ be a set of formal
indeterminates, and consider the free monoid, $X^{\star}$, generated
by $X$. Define the {\it tensor representation} of a morphism $f \in
\Delta S_+([n], [m])$ as the image of $(x_0, x_1, \ldots, x_n)$ under
$B^{sym_+}X^{\star}f$.  Typically, a morphism whose tensor
representation is $(y_0, y_1, \ldots, y_m)$ (in which each $y_i$ is a
possibly-empty monomial in the indeterminates $x_j$) will be written
$y_0 \otimes y_1 \otimes \cdots \otimes y_m$, hence the terminology.
The correspondence sending a morphism to its tensor representation is
one-to-one by uniqueness of decomposition of $\Delta S_+$ morphisms
into a $\Delta_+$ morphism, which determines the number of factors in
each monomial $y_i$, and a permutation, which determines the total
order of the indices.
\begin{example}
  Let $\phi \in \Delta_+([2], [1])$ be the map sending
  $i \mapsto i$ for $i=0,1$, and $2 \mapsto 1$.  Let $\gamma = (0,1,2)
  \in \Sigma_3$.  The tensor represenatation of $(\phi, \gamma)$ is
  $(x_1, x_2x_0)$, or $x_1 \otimes x_2x_0$.
\end{example}
Tensor notation provides the link to $\mathcal{F}(as)$.
\begin{proposition}\label{prp.iso}
  There is an isomorphism of categories $F \colon \Delta S \to
  \mathcal{F}(as)$.
\end{proposition}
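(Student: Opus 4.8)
The plan is to construct the functor $F$ directly on objects and morphisms, verify functoriality, and then exhibit an inverse. On objects, since $\Delta S$ has objects $[n] = \{0,1,\ldots,n\}$ for $n \geq 0$, I would send $[n]$ to $\underline{n+1} = \{1, 2, \ldots, n+1\}$, shifting the indexing by one. The content of the functor is entirely in what it does to morphisms, and here the tensor representation introduced just above is the key tool. Given $f \in \Delta S([n],[m])$ with tensor representation $y_0 \otimes y_1 \otimes \cdots \otimes y_m$ (each $y_i$ a monomial in $x_0, \ldots, x_n$, and since $\Delta S$ has no initial object every $x_j$ appears exactly once among the $y_i$), I would define $F(f)$ to be the noncommutative set map whose underlying set map $\underline{\lambda}\colon \underline{n+1} \to \underline{m+1}$ sends $j+1 \mapsto i+1$ precisely when $x_j$ occurs in $y_i$, and whose total ordering $<_\lambda$ on $\underline{\lambda}^{-1}(i+1)$ is the one reading off the order in which the $x_j$ appear left-to-right in the monomial $y_i$.

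The verification proceeds in a few steps. First I would check $F$ is well-defined: by the remark following the \emph{tensor representation} definition, the correspondence $f \mapsto (y_0, \ldots, y_m)$ is a bijection between $\Delta S([n],[m])$ and the set of such ordered-monomial tuples, so $F(f)$ is unambiguously determined; and conversely every morphism of $\mathcal{F}(as)(\underline{n+1},\underline{m+1})$ clearly arises this way, which already gives bijectivity of $F$ on each hom-set. Second, I would verify $F$ respects identities — the identity on $[n]$ has tensor representation $x_0 \otimes x_1 \otimes \cdots \otimes x_n$, giving the identity noncommutative set map. Third, and this is the crux, I would check $F$ respects composition: given $g\colon [n]\to[m]$ and $f\colon[m]\to[p]$, the tensor representation of $fg$ is obtained by substituting, into the tensor representation of $f$, the monomials $y_0, \ldots, y_m$ of $g$ for the indeterminates $x_0, \ldots, x_m$ — this is exactly the functoriality property $\langle fg, \mathbf{m}\rangle = \langle f, \langle g, \mathbf{m}\rangle\rangle$ from~(\ref{eqn.eval_prop}) applied with $\mathbf{m} = (x_0, \ldots, x_n)$ in $X^\star$. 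One then checks that this substitution of monomials, with the induced left-to-right reading of orders on preimages, matches precisely the definition of composition in $\mathcal{F}(as)$: the underlying set map composes, and the total order on a preimage of the composite is the lexicographic-type order built from the order under $f$ refined by the orders under $g$.

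The main obstacle is bookkeeping in this composition step: one must carefully track how the total orders on preimage sets interact when monomials are substituted into monomials, since a preimage set of the composite $\underline{f}\,\underline{g}$ gets its ordering by first grouping according to which $x_k$ (fixed $k$ in the support of the relevant $y_i$) each $x_j$ landed in, ordered by $<_g$ restricted within that group, and then concatenating these groups in the order dictated by $<_f$ on $\underline{f}^{-1}(i+1)$. Matching this against the word obtained by literal substitution of monomials requires being precise about the conventions, but it is a direct check once the definitions are unwound, and the uniqueness-of-decomposition statement keeps everything rigid. Finally, having shown $F$ is a functor that is bijective on objects (after the index shift) and on each hom-set, I conclude $F$ is an isomorphism of categories; explicitly, the inverse sends a noncommutative set map to the morphism of $\Delta S$ with the corresponding tensor representation, read off by writing, for each $i$, the monomial $\prod x_j$ over $j+1 \in \underline{\lambda}^{-1}(i+1)$ taken in $<_\lambda$-order.
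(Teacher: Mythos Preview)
Your proposal is correct and follows essentially the same approach as the paper: both define $F$ on objects by $[n]\mapsto\underline{n+1}$ and on morphisms via the tensor representation, reading off the underlying set map and the total orders on preimages from the monomials $y_i$. In fact your write-up is more thorough than the paper's, which simply states that bijectivity is clear and leaves the verification of functoriality (your careful composition check) to the reader.
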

\begin{proof}
  The functor $F$ takes $[n]$ to $\underline{n+1}$ for each $n \geq
  0$.  Let $f \colon [n] \to [m]$ be a morphism in $\Delta S$ and
  write $f = (y_0, y_1, \ldots, y_m)$ in tensor notation.  Then $F(f)
  = \lambda$, where $\underline{\lambda}$ is the set function such
  that $\underline{\lambda}(j) = i \; \Leftrightarrow \; x_{j-1}$
  appears as a factor in $y_{i-1}$, while the total ordering
  $<_{\lambda}$ on $\underline{\lambda}^{-1}(i)$ is induced by the
  ordering of factors in $y_{i-1}$, that is, if $y_{i-1} =
  x_{j_1-1}x_{j_2-1} \cdots x_{j_k-1}$, then $j_1 <_{\lambda} < j_2
  <_{\lambda} \cdots <_{\lambda} j_k$.  Bijectivity of $F$ is clear,
  and verifying that $F$ is indeed a functor is left to the reader.
\end{proof}
\begin{remark}
  If we denote by $\mathcal{F}(as)_+$ the category $\mathcal{F}(as)$
  enlarged by the initial object $\underline{0} = \emptyset$, then
  Prop.~\ref{prp.iso} implies $\Delta S_+ \cong \mathcal{F}(as)_+$.
\end{remark}
There are tantalizing links among symmetric homology, cyclic homology
and the so-called $\Gamma$-homology theories of Alan Robinson and
Sarah Ann Whitehouse and related $\Gamma(as)$ and $\mathcal{F}(as)$
homologies, theories that have been much studied
recently~\cite{Robinson,RobinsonWhitehouse,Whitehouse,Richter,PR}.
Pirashvili and Richter~\cite{PR} identify the cyclic homology of any
$\mathcal{F}(as)$-module $G$ with $\mathrm{Tor}_*^{\mathcal{F}(as)}(b,
G)$, where $b$ is the cokernel of a certain map of
$\mathcal{F}(as)^{\mathrm{op}}$-modules.  We shall interpret this
statement using $\Delta S$ presently.  Define for each $m \geq 0$ the
projective $\Delta S^{\mathrm{op}}$-module,
\begin{equation}\label{eqn.P_n-def}
  (\Delta S)_m \stackrel{def}{=} k\left[\Delta S(-, [m])\right],
\end{equation}
so in particular, for any $n \geq 0$, $(\Delta S)_m([n])$ is the free
$k$-module generated by the set $\Delta S([n], [m])$.  The covariant
version $(\Delta S)^m$ is defined analogously, however we have no need
for it in this paper.  In light of Prop.~\ref{prp.iso}, we may
interpret Pirashvili and Richter's result thus: $HC_*(G) \cong
\mathrm{Tor}_*^{\Delta S}(b, G)$, where $b$ fits into the exact
sequence below.
\begin{equation}
  \begin{tikzcd}
    (\Delta S)_1 \rar{\eta} \rar & (\Delta S)_0
    \rar & b \rar & 0,
  \end{tikzcd}
\end{equation}
and $\eta$ is defined on morphisms $f \colon [m] \to [1]$ by $\eta(f)
= x_0x_1 \circ f - x_1x_0 \circ f$.  When $G = B_*^{sym}A$, one finds
the {\it cyclic homology of the symmetric bar construction},
$HC_*(B_*^{sym}A)$, which coincides with the cyclic homology of $A$, as
Loday's {\it cyclic bar construction}~\cite{L}, $B_*^{cyc}A$, is the
restriction of $B_*^{sym}A$ under the inclusion of categories, $\Delta
C \hookrightarrow \Delta S$, and the duality isomorphism, $\Delta
C^{\mathrm{op}} \cong \Delta C$.  Indeed, we have a chain of
isomorphisms,
\begin{equation}
  HC_*(A) = HC_*(B^{cyc}A) \cong HC_*(B_*^{sym}A) \cong
  \mathrm{Tor}_*^{\Delta S}(b, B_*^{sym}A).
\end{equation}

\subsection{Homotopy-Everything Operads}\label{sub.he-operads}

Let $\mathbf{S}$ be the {\it symmetric groupoid}, which has as objects
$\underline{n}$ for $n \geq 0$, and whose only morphisms are the
permutations $\sigma \colon \underline{n} \to \underline{n}$.  Thus
$\mathbf{S}^{\mathrm{op}} \cong \mathrm{Aut}\Delta S_+$, via the map
$\sigma \mapsto (\mathrm{id}, \sigma)$.  Here,
$\mathrm{Aut}\mathscr{C}$ is the subcategory of $\mathscr{C}$
containing the same objects and only the automorphisms of
$\mathscr{C}$.  Therefore, any $\Delta S_+$ object is naturally an
$\mathbf{S}^{\mathrm{op}}$ object.  Present in the early work of
Boardman and Vogt, and developed later by May and others, is the
concept of {\it homotopy-everything}, or $E_{\infty}$,
operad~\cite{BV,M,MSS}.  As our operads will be defined in various
categories, not just topological spaces, it is important to clearly
define certain concepts.  Let $\mathscr{C}$ be a small symmetric
monoidal category with unit object $\mathbf{1}$.  Suppose there is a
model structure~\cite{Quillen67,Hovey2007} on $\mathscr{C}$ (although,
we only need the notion of equivalences, not (co)fibrations).  We
define an $E_{\infty}$ operad in $\mathscr{C}$ to be a functor
$\mathscr{P} \colon\mathbf{S}^{\mathrm{op}} \to \mathscr{C}$, with
structure maps satisfying the standard commutative diagrams of an
operad, such that each component $\mathscr{P}(n)$ is equivalent (in
the model structure) to $\mathbf{1}$.  We also require the symmetric
group action on each $\mathscr{P}(n)$ to be free.  In this note, we
are primarily interested in operads in the category of small
categories ($\mathbf{Cat}$), whose model structure is induced by the
nerve functor, and in simplicial sets
($\mathbf{Set}^{\Delta^{\mathrm{op}}}$), simplicial $k$-modules
($k\textrm{-}\mathbf{Mod}^{\Delta^{\mathrm{op}}}$), and
non-negatively-graded $k$-complexes ($\mathbf{Ch}^+_{\bullet}$) --
each with the standard model structure.

\begin{example}
  May's {\it little $\infty$-cubes operad} $\mathcal{C}_{\infty}$ is
  $E_{\infty}$ in the category of topological spaces.
\end{example}

\begin{example}\label{exa.D_Cat}
  Let $\mathscr{D}_{\mathbf{Cat}}$ denote the operad in $\mathbf{Cat}$
  defined by $\mathscr{D}_{\mathbf{Cat}}(m) = E\Sigma_m$. That is, the
  objects of $\mathscr{D}_{\mathbf{Cat}}(m)$ are the elements of the
  symmetric group on $m$ letters, and for each pair of objects
  $(\sigma, \tau)$, there is a unique morphism $\tau\sigma^{-1}$ from
  $\sigma$ to $\tau$. The structure map in
  $\mathscr{D}_{\mathbf{Cat}}$ is the family of functors
  $\mathscr{D}_{\mathbf{Cat}}(m) \times \mathscr{D}_{\mathbf{Cat}}
  (k_1) \times \cdots \times \mathscr{D}_{\mathbf{Cat}}(k_m)
  \longrightarrow \mathscr{D}_{\mathbf{Cat}}(k)$, where $k = \sum_i
  k_i$, defined on objects by:
  \begin{equation}
    (\sigma, \tau_1, \ldots, \tau_m) \mapsto \sigma_{\mathbf{k}}
    \cdot(\tau_1 \oplus \cdots \oplus \tau_m).
  \end{equation}
  The action of $\Sigma_m^{\mathrm{op}}$ on objects of
  $\mathscr{D}_{\mathbf{Cat}}(m)$ is given by right multiplication of
  group elements.  Since each $E\Sigma_m$ has free $\Sigma_m$ action
  and is a contractible category, the operad is $E_{\infty}$.
\end{example}

\begin{remark}
  The notation $\mathscr{D}_{\mathbf{Cat}}$ is related to the notation
  used in May~\cite{M,M3}.  May uses $\widetilde{\Sigma}_m$ for
  $\mathscr{D}_{\mathbf{Cat}}(m)$ and defines the related operad
  $\mathscr{D}$ in the category of spaces, as the geometric
  realization of the nerve of $\widetilde{\Sigma}$.  The nerve of
  $\mathscr{D}_{\mathbf{Cat}}$ is generally known in the literature as
  the {\it Barratt-Eccles operad} (See~\cite{BE}, where the notation
  for $N\mathscr{D}_{\mathbf{Cat}}$ is $\Gamma$, not to be confused
  with the $\Gamma$ of $\Gamma$-homology!).  We denote by
  $\mathscr{D}_{\mathbf{Mod}}$ the associated $E_{\infty}$ operad in
  the category of simplicial $k$-modules defined by
  $\mathscr{D}_{\mathbf{Mod}}(m) = E_*\Sigma_m$ (the standard bar
  resolution of $k$ by free $k[\Sigma_m]$-modules), and the Moore
  complex (that is, the complex of normalized chains~\cite{GJ}) of
  $\mathscr{D}_{\mathbf{Mod}}(m)$ by
  $\mathscr{D}_{\mathbf{Ch}_{\bullet}^{+}}(m)$.
\end{remark}

\subsection{Operad-algebras}\label{sub.operad-algebras}

By {\it operad-algebra}, we mean an algebra over an operad in the
usual sense (as in~\cite{MSS}: II.1.4), in which the algebra lies in
the same underlying category as the operad acting on it.  As an
example, if $\mathscr{C}$ is a permutative category, then
$B\mathscr{C}$ is naturally an $E_{\infty}$-space~\cite{M3} (that is,
an $E_{\infty}$ algebra in $\mathbf{Top}$).  In fact, $\mathscr{C}$ is
itself an $E_{\infty}$ algebra in $\mathbf{Cat}$.  It is useful to
regard a permutative $\mathscr{C}$ explicitly as
$\mathscr{D}_{\mathbf{Cat}}$-algebra according to the structure map
$\theta$ of Diagram~(\ref{eq.theta-diagram}).  Here, $f_i \colon C_i
\to D_i$ for each $i = 1, 2, \ldots, m$, and the map $T_{\tau
  \sigma^{-1}}$ permutes the components according to the permutation
$\tau \sigma^{-1}$ using the symmetry transformation and strict
associativity of the monoidal product $\odot$ of $\mathscr{C}$.
\begin{equation}\label{eq.theta-diagram}
  \begin{tikzcd}
    (\sigma, C_1, \ldots, C_m) \rar{\theta} \ar{dd}[swap]{\tau
      \sigma^{-1} \times \mathbf{f} }& \sigma\mathbf{C}^{\odot}
    \dar{\sigma\mathbf{f}^{\odot}}\\ & \sigma\mathbf{D}^{\odot}
    \dar[swap]{ \cong }[swap]{ T_{\tau\sigma^{-1}} }\\ (\tau, D_1,
    \dots, D_m) \rar{\theta} & \tau\mathbf{D}^{\odot}
  \end{tikzcd}
\end{equation}

\section{Operad Structure within Symmetric Homology}

\subsection{Monoid Algebras}

In order to produce an $E_{\infty}$ structure for the simplicial
module that computes symmetric homology, we first have to work at the
level of monoids and simplicial sets.
\begin{lemma}\label{lem.X-perm-cat}
  Let $M$ be a monoid.  $C(\Delta S_+, B^{sym_+}M)$ has the structure
  of $E_{\infty}$ algebra in the category of simplicial sets.
\end{lemma}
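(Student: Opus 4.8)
The plan is to build the $E_\infty$ structure on $C(\Delta S_+, B^{sym_+}M)$ by transporting the $E_\infty$ structure that already exists at the categorical level, using the Gabriel--Zisman nerve construction as a monoidal functor. The key input is Lemma~\ref{lem.TM_permutative}: $\mathcal{T}M$ is a permutative category, hence (by the discussion in \S\ref{sub.operad-algebras}) a $\mathscr{D}_{\mathbf{Cat}}$-algebra via the structure map $\theta$ of Diagram~(\ref{eq.theta-diagram}). So first I would observe that there is a natural identification of $C(\Delta S_+, B^{sym_+}M) = N(-\setminus\Delta S_+)\times_{\Delta S_+} B^{sym_+}M$ with (a variant of) the nerve of $\mathcal{T}M$: an object of $\mathcal{T}M$ is a tuple $\mathbf{m}\in M^p$, i.e.\ an object of $\Delta S_+$ together with a decoration, and a morphism is a pair $(f,\mathbf{m})$ with $f$ a morphism of $\Delta S_+$ — which is precisely the data that the coend over $\Delta S_+$ records. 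More carefully, $N(-\setminus\Delta S_+)\times_{\Delta S_+}B^{sym_+}M$ in simplicial degree $q$ consists of chains $[p_0]\to[p_1]\to\cdots\to[p_q]$ in $\Delta S_+$ together with an element of $B^{sym_+}M[p_0]=M^{p_0+1}$, modulo the coend relation, and applying each morphism in turn to that element (via the evaluation maps $\langle-,-\rangle$) turns this into exactly a $q$-simplex of $N(\mathcal{T}M)$. I would make this identification precise and note it is natural and monoidal for concatenation $\odot$.

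Next I would feed $\mathcal{T}M$, as a $\mathscr{D}_{\mathbf{Cat}}$-algebra, into the nerve functor $N\colon\mathbf{Cat}\to\mathbf{Set}^{\Delta^{\mathrm{op}}}$. Since $N$ is product-preserving (it is a right adjoint, and finite products in $\mathbf{Cat}$ go to finite products of simplicial sets), it carries operads in $\mathbf{Cat}$ to operads in simplicial sets and operad-algebras to operad-algebras. Thus $N(\mathcal{T}M)$ becomes an $N\mathscr{D}_{\mathbf{Cat}}$-algebra, i.e.\ an algebra over the Barratt--Eccles operad in simplicial sets. Because $N\mathscr{D}_{\mathbf{Cat}}$ has free $\Sigma_m$-actions in each arity and each $N\mathscr{D}_{\mathbf{Cat}}(m)=NE\Sigma_m$ is contractible (weakly equivalent to the point in the standard model structure on simplicial sets), it is an $E_\infty$ operad in the sense defined in \S\ref{sub.he-operads}. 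Combining this with the identification from the first paragraph yields the $E_\infty$ structure on $C(\Delta S_+, B^{sym_+}M)$.

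I expect the main obstacle to be the first step: verifying carefully that the Gabriel--Zisman chain $N(-\setminus\Delta S_+)\times_{\Delta S_+}B^{sym_+}M$ really is (isomorphic to) $N(\mathcal{T}M)$ as a simplicial set, and — crucially — that this isomorphism is compatible with the concatenation product $\odot$ and the symmetry (block transposition) used to define the permutative structure on $\mathcal{T}M$. The subtlety is that the coend $\times_{\Delta S_+}$ identifies a morphism acting on a decoration with the decoration it produces, and one must check that the degeneracy/face maps on both sides match under this identification and that the monoidal structures are strictly (not just up to coherent isomorphism) compatible, since the permutative structure on $\mathcal{T}M$ relies on strict associativity of $\odot$ in $\Delta S_+$. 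Once that bookkeeping is done, the passage through $N$ and the recognition of $N\mathscr{D}_{\mathbf{Cat}}$ as $E_\infty$ are formal. A secondary point worth spelling out is that the free $\mathbf{S}^{\mathrm{op}}$-action required by our definition of $E_\infty$ operad is inherited from the free $\Sigma_m$-action on $E\Sigma_m$, which nerve preserves.
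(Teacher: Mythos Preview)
Your proposal is correct and follows essentially the same route as the paper: identify $C(\Delta S_+, B^{sym_+}M)$ with $N\mathcal{T}M$, invoke Lemma~\ref{lem.TM_permutative} to get the $\mathscr{D}_{\mathbf{Cat}}$-algebra structure, and push it through the (symmetric monoidal) nerve. The paper carries out the identification you anticipate as the main obstacle by writing down explicit mutually inverse simplicial maps $L_M$ and $R_M$ (using $\Delta S_+$-equivariance to normalize the incoming morphism to an identity); note that once this is an isomorphism of simplicial sets, the $E_\infty$ structure transports along it directly, so you need not separately verify monoidal compatibility of the isomorphism.
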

\begin{proof}
  Consider $\mathcal{T}M$, as in \S\ref{sub.monoids}. A typical
  $i$-simplex of $N\mathcal{T}M$ has the form,
  \begin{equation}\label{eqn.chain1}
    \langle f_i \cdots f_2f_1, \mathbf{m} \rangle \stackrel{ f_i
    }{\longleftarrow} \cdots \stackrel{ f_3 } {\longleftarrow} \langle
    f_2f_1, \mathbf{m}\rangle \stackrel{ f_2 } {\longleftarrow}
    \langle f_1, \mathbf{m}\rangle \stackrel{ f_1 }{\longleftarrow}
    \mathbf{m},
  \end{equation}
  in which $\mathbf{m} = (m_0, m_1, \ldots, m_n)$.
  Expression~(\ref{eqn.chain1}) can be rewritten uniquely as an
  element of $M^{n+1}$ together with an element ($i$-simplex) of
  $N\Delta S_+$:
  \begin{equation}\label{eqn.chain2}
    \left( [n_i] \stackrel{f_i}{\gets} \cdots \stackrel{f_3}{\gets}
         [n_2] \stackrel{f_2}{\gets} [n_1] \stackrel{f_1}{\gets} [n]
         \,,\, \mathbf{m} \right),
  \end{equation}
  which in turn is uniquely identified with an element of $C(\Delta
  S_+, B^{sym_+}M)$:
  \begin{equation}\label{eqn.chain3}
    \left( [n_i] \stackrel{f_i}{\gets} \cdots \stackrel{f_3}{\gets}
         [n_2] \stackrel{f_2}{\gets} [n_1] \stackrel{f_1}{\gets} [n]
         \stackrel{\mathrm{id}}{\gets} [n] \,,\, \mathbf{m} \right).
  \end{equation}
  Thus,~(\ref{eqn.chain1})--(\ref{eqn.chain3}) define a map $L_M
  \colon N\mathcal{T}M \to C(\Delta S_+, B^{sym_+}M)$.

  On the other hand, a typical element of $C(\Delta S_+, B^{sym_+}M)$
  may not {\it a priori} have an identity morphism $[n] \to [n]$ as
  the ``incoming morphism'', but by using $\Delta S_+$-equivariance,
  we can always express the element in the desired form:
  \begin{equation}\label{eqn.chain4}
    \left( [n_i] \stackrel{f_i}{\gets} \cdots \stackrel{f_2}{\gets}
         [n_1] \stackrel{f_1}{\gets} [n] \stackrel{f_0}{\gets} [n']
         \,,\, \mathbf{m}' \right) = \left( [n_i]
         \stackrel{f_i}{\gets} \cdots \stackrel{f_2}{\gets} [n_1]
         \stackrel{f_1}{\gets} [n] \stackrel{\mathrm{id}}{\gets} [n]
         \,,\, \langle f_0, \mathbf{m}'\rangle \right).
  \end{equation}
  This element is identified with the following $i$-simplex of
  $N\mathcal{T}M$:
  \begin{equation}\label{eqn.chain5}
    \langle f_i \cdots f_0, \mathbf{m}' \rangle \stackrel{ f_i
    }{\longleftarrow} \cdots \stackrel{ f_2 } {\longleftarrow} \langle
    f_1f_0, \mathbf{m}'\rangle \stackrel{ f_1 }{\longleftarrow}
    \langle f_0, \mathbf{m}' \rangle.
  \end{equation}
  Thus,~(\ref{eqn.chain4})--(\ref{eqn.chain5}) define a map $R_M
  \colon C(\Delta S_+, B^{sym_+}M) \to N\mathcal{T}M$.

  Clearly, $L_M$ and $R_M$ are simplicial maps that are inverse of one
  another and the isomorphism follows:
  \begin{equation}
    C(\Delta S_+, B^{sym_+}M) \cong N\mathcal{T}M.
  \end{equation}
  
  By Lemma~\ref{lem.TM_permutative}, $\mathcal{T}M$ is permutative.
  Since the nerve functor $N$ is symmetric monoidal, the
  $\mathscr{D}_{\mathbf{Cat}}$-algebra structure of
  Diagram~\ref{eq.theta-diagram} is induced to the level of simplicial
  sets.  This implies $N\mathcal{T}M$, and hence also $C(\Delta S_+,
  B^{sym_+}M)$, is an $E_{\infty}$ algebra.
\end{proof}

\begin{remark}
  The fact that neither $\Delta_+$ nor $\Delta C_+$ are permutative
  categories implies that the proof of Lemma~\ref{lem.X-perm-cat} does
  not extend to simplicial or cyclic homology.  However it is
  interesting to see that in certain special cases, there does seem to
  be way to define a Dyer-Lashoff structure on cyclic
  homology~\cite{BergRog}.
\end{remark}

\begin{remark}
  The proof of Lemma~\ref{lem.X-perm-cat} unfortunately does not
  extend directly to arbitrary algebras.  Indeed this is a serious
  obstruction to Theorem~8 of~\cite{AF}!  Presently, we do not have a
  way to prove that $HS_*(A) \cong H_*(B(D, T, A))$ (as~\cite{AF}
  claims), where $D$ is the monad associated to the operad
  $\mathscr{D}_{\mathbf{Mod}}$, and $T$ is the functor that takes a
  $k$-module to its tensor algebra.  The author suspects that the
  isomorphism is false for arbitrary algebras.
\end{remark}

\subsection{A Structure Map}\label{sub.structure_map}

For each $m \geq 0$, set
\begin{equation}
  \label{eqn.F(m)}
  \mathcal{F}(m) \stackrel{def}{=} [m-1] \setminus \Delta S_+.
\end{equation}
Eqn.~(\ref{eqn.F(m)}) defines $\mathcal{F}$ as a $\Delta
S_+^{\mathrm{op}}$ category via $[n] \mapsto \mathcal{F}(n+1)$, hence
also as an $\mathbf{S}$ category.  Precompose the duality functor
$\mathbf{S}^{\mathrm{op}} \to \mathbf{S}$ sending $\sigma \mapsto
\sigma^{-1}$ to define $\mathcal{F}$ as a $\mathbf{S}^{\mathrm{op}}$
category.  Because of the many ``reversals'' wrapped up in this
definition, it is important to show the details.  For each $m \geq 0$,
there is a {\it right} $\Sigma_m$ action on $\mathcal{F}(m)$,
\begin{eqnarray}
  \label{eqn.right_action1}
  (\phi, \gamma) \bullet \sigma &\stackrel{def}{=}& (\phi, \gamma)
  \circ (\mathrm{id}, \sigma^{-1})\\
  \label{eqn.right_action2}
  &=& (\phi, \sigma^{-1}\gamma).
\end{eqnarray}
There is also a left $\Sigma_n$ action on each set
$\Delta S_+\left( [m-1], [n-1] \right)$:
\begin{eqnarray}
  \label{eqn.left_action1}
  \tau \bullet (\phi, \gamma) &\stackrel{def}{=}& (\mathrm{id}, \tau^{-1})
  \circ (\phi, \gamma) \\ 
  \label{eqn.left_action2}
  &=& (\phi^{(\tau^{-1})},
  \gamma(\tau^{-1})^{\phi}),
\end{eqnarray}
and the two actions commute in the sense that
\begin{equation}
  \tau \bullet (f \bullet \sigma) = (\tau \bullet f) \bullet \sigma.
\end{equation}

Let $m, j_1, j_2, \ldots, j_m \geq 0$ and $j = \sum j_s$.  Assume
morphisms $f_i, g_i$ of $\Delta S_+$, for $1 \leq i \leq
m$, have specified sources and targets: $[j_i-1] \stackrel{f_i}{\to}
[p_i-1] \stackrel{g_i}{\to} [q_i-1]$.  Define a family of maps,
\begin{equation}
  \mu = \mu_{m,j_1,\ldots,j_m} \colon\mathscr{D}_{\mathbf{Cat}}(m)
  \times \prod_{s=1}^m \mathcal{F}(j_s) \longrightarrow
  \mathcal{F}(j),
\end{equation}
on objects by
\begin{equation}\label{eq.mu-objects}
  \mu( \sigma, f_1, f_2, \ldots, f_m ) \stackrel{def}{=}
  \sigma_{\mathbf{p}} \bullet \mathbf{f}^{\odot}.
\end{equation}
Define $\mu$ on morphisms by Diagram~(\ref{eq.mu-morphisms}).
\begin{equation}\label{eq.mu-morphisms}
  \begin{tikzcd}[column sep=large]
    (\sigma, f_1, \ldots, f_m)\arrow{ddd}[swap]{\tau\sigma^{-1} \times
      \mathbf{g} } \rar{\mu} & \sigma_{\mathbf{p}} \bullet
    \mathbf{f}^{\odot} \dar{(\sigma_{\mathbf{p}})^{-1}}\\ &
    \mathbf{f}^{\odot} \dar{\mathbf{g}^{\odot}} \\ &
    \mathbf{gf}^{\odot} \dar{ \tau_{\mathbf{q}} } \\ (\tau, g_1f_1,
    \ldots, g_mf_m) \rar{\mu} & \tau_{\mathbf{q}} \bullet
    \mathbf{gf}^{\odot}
  \end{tikzcd}
\end{equation}
The effect is simply ``untwisting'' by block permutation, applying the
morphisms $g_i$ in the natural order, then ``retwisting'' by the
appropriate block permutation.  Functoriality of $\mu$ is clear.  We
show in \S \ref{sub.operad-module-structure} that the maps $\mu$
define a left operad-module structure (over
$\mathscr{D}_{\mathbf{Cat}}$) on $\mathcal{F}$ (the reader is referred
to~\cite{MSS} for the definition of operad-module).

\subsection{Operad-module Structure of $\mathcal{F}$.}
\label{sub.operad-module-structure}

Consider the set of formal indeterminates $X = \{x_1, x_2, \ldots\}$,
and the free monoid $X^{\star}$ as defined in
\S\ref{sub.noncomm_sets}.  In this section, we prove that
$\mathcal{T}X^{\star}$ is isomorphic, as a category, to a certain
category built from $\mathcal{F}$.  We then use this isomorphism to
prove that $\mathcal{F}$ admits the structure of operad-module over
$\mathscr{D}_{\mathbf{Cat}}$.

Since $\mathcal{F}$ is an $\mathbf{S}^{\mathrm{op}}$ object, there is
a right action of the symmetric group $\Sigma_m$ on $\mathcal{F}(m)$
for each $m \geq 0$; recall
Eqns~(\ref{eqn.right_action1})--(\ref{eqn.right_action2}).  There is a
left action of $\Sigma_m$ on $X^m$ by permutation, $\sigma \bullet
\mathbf{x} = \sigma\mathbf{x} = \left(x_{\sigma^{-1}(1)}, \ldots,
x_{\sigma^{-1}(m)}\right)$.  Thus, the fibered product of categories
can be formed, $\mathcal{F}(m) \times_{\Sigma_m} X^m$.  Here $X$ is
taken to be a discrete category.  

For a set $\{ \mathscr{C}_i \}$ of small categories whose object sets
are pairwise disjoint, we use the notation $\bigcup_i \mathscr{C}_i$
to represent the category whose object set is $\bigcup_i
\mathrm{Ob}\mathscr{C}_i$, and whose morphisms only those morphisms in
$\mathrm{Mor}{\mathscr{C}_i}$ for each $i$.  This is, of course, a
particular realization of the coproduct of a set of small categories.
\begin{lemma}\label{lem.TX^star}
  There is an isomorphism of categories,
  \begin{equation}
    e \colon \bigcup_{m \geq 0}\left(\mathcal{F}(m) \times_{\Sigma_m}
    X^m\right) \to \mathcal{T}X^{\star},
  \end{equation}
  via the evaluation map $e$ defined by $e(f, \mathbf{x})
  \stackrel{def}{=} \langle f, \mathbf{x} \rangle$.
\end{lemma}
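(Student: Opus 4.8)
The plan is to construct an explicit inverse to $e$ and check both composites are the identity, treating the isomorphism as a bookkeeping statement about how a morphism of $\mathcal{T}X^{\star}$ decomposes. First I would unwind what an object and a morphism of each side is. An object of $\bigcup_m \bigl(\mathcal{F}(m)\times_{\Sigma_m} X^m\bigr)$ is an equivalence class of pairs $([m-1],\mathbf{x})$ with $\mathbf{x}\in X^m$, where $(f\bullet\sigma,\mathbf{x})\sim(f,\sigma\bullet\mathbf{x})$; but since the objects of $\mathcal{F}(m)=[m-1]\setminus\Delta S_+$ over which we are fibering are really the morphisms out of $[m-1]$, and the relevant fiber over a \emph{source object} is just $X^m$ modulo $\Sigma_m$, I should be careful to describe $e$ on objects as sending the class of $([m-1],\mathbf{x})$ (thought of via the identity morphism of $[m-1]$) to the sequence $\mathbf{x}\in (X^{\star})^m$, i.e.\ the object $(x_1,\dots,x_m)$ of $\mathcal{T}X^{\star}$. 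A morphism of the left side with source in $\mathcal{F}(m)\times_{\Sigma_m}X^m$ is (a class of) a pair $(f,\mathbf{x})$ with $f\colon[m-1]\to[n-1]$ in $\Delta S_+$, and $e$ sends it to the $\mathcal{T}X^{\star}$-morphism $(f,\mathbf{x})$ with source $\mathbf{x}$ and target $\langle f,\mathbf{x}\rangle$.

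Next I would verify that $e$ is well-defined on equivalence classes: replacing $(f,\mathbf{x})$ by $(f\bullet\sigma,\sigma^{-1}\bullet\mathbf{x})$ should give the same $\mathcal{T}X^{\star}$-morphism. By definition $f\bullet\sigma = f\circ(\mathrm{id},\sigma^{-1})$, so $\langle f\bullet\sigma,\mathbf{y}\rangle = \langle f,\langle(\mathrm{id},\sigma^{-1}),\mathbf{y}\rangle\rangle$ by the evaluation property~(\ref{eqn.eval_prop}), and $\langle(\mathrm{id},\sigma^{-1}),\sigma^{-1}\mathbf{x}\rangle$ is exactly the permutation action that cancels $\sigma^{-1}$, returning $\mathbf{x}$; so both source and target agree. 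Then I would check functoriality of $e$: this is immediate from~(\ref{eqn.eval_prop}) together with the composition formula in $\Delta S_+$, since a composite of morphisms $(g,\langle f,\mathbf{x}\rangle)\circ(f,\mathbf{x})$ in $\mathcal{T}X^{\star}$ is $(gf,\mathbf{x})$ by the very definition of $\mathcal{T}$, and composition on the left side is composition in $\Delta S_+$ on the first coordinate.

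The crux is constructing the inverse, and this is where the noncommutative-sets/tensor-representation picture from \S\ref{sub.noncomm_sets} does the real work. Given an object $\mathbf{w}=(w_1,\dots,w_m)$ of $\mathcal{T}X^{\star}$ — a finite sequence of (possibly empty) monomials in the $x_i$ — I want to recover a unique pair in $\mathcal{F}(m)\times_{\Sigma_m}X^m$, and given a morphism $(f,\mathbf{w})$ of $\mathcal{T}X^{\star}$ I want a unique preimage. The point is that an arbitrary object of $\mathcal{T}X^{\star}$ is a sequence of monomials, and such a sequence is precisely the tensor representation of a morphism of $\Delta S_+$ applied to a tuple $\mathbf{x}\in X^m$ where $m$ is the total number of indeterminate-factors appearing (counted with multiplicity); uniqueness of the decomposition of a $\Delta S_+$-morphism into a $\Delta_+$ part (recording how many factors land in each slot) and a permutation part (recording the order of the factors) — the same uniqueness invoked in the proof of Prop.~\ref{prp.iso} — gives that the pair $(f,\mathbf{x})$, up to the $\Sigma_m$-action reindexing $\mathbf{x}$, is determined. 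So I would define $e^{-1}$ on objects by: write $\mathbf{w}$ in tensor notation, read off the unique $\Delta_+$-shape and the ordering permutation, package these as $f\colon[m-1]\to[n-1]$ in $\Delta S_+$ with $\langle f,(x_1,\dots,x_m)\rangle=\mathbf{w}$, and return the class of $(f,(x_1,\dots,x_m))$; and similarly on morphisms, using that a $\mathcal{T}X^{\star}$-morphism already comes labelled by a $\Delta S_+$-morphism $f$ and a source sequence. Checking $e\circ e^{-1}=\mathrm{id}$ and $e^{-1}\circ e=\mathrm{id}$ is then exactly the content of ``tensor representation is a bijection'' from \S\ref{sub.noncomm_sets}: one composite is $\langle f,\mathbf{x}\rangle\mapsto$(its tensor decomposition)$\mapsto\langle f,\mathbf{x}\rangle$, the other reads a sequence of monomials, re-encodes it, and re-evaluates. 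The main obstacle I anticipate is purely notational: keeping straight the three layers of reversals — the $\mathrm{op}$ in $\mathbf{S}^{\mathrm{op}}$ versus the left/right $\Sigma_m$-actions of~(\ref{eqn.right_action1})--(\ref{eqn.left_action2}), and the convention $\sigma\mathbf{x}=(x_{\sigma^{-1}(1)},\dots)$ — so that the $\Sigma_m$-coinvariants on the left match the relabelling freedom in choosing which indeterminates name the $m$ factors. Once the conventions are fixed, each verification reduces to a single application of~(\ref{eqn.eval_prop}) and the uniqueness-of-decomposition statement, so I would present it crisply rather than grinding through cases.
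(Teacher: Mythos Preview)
Your overall strategy matches the paper's: verify that evaluation descends to the $\Sigma_m$-coinvariants using~(\ref{eqn.eval_prop}), then build an explicit inverse by reading a sequence of monomials back into a pair (morphism, tuple) via unique factorization, and conclude that $e$ is a category isomorphism. The well-definedness check and the appeal to free-monoid uniqueness are exactly what the paper does.

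There is, however, a genuine gap in your description of the inverse. You propose to send an object $\mathbf{w}$ of $\mathcal{T}X^{\star}$ to the class of $(f,(x_1,\dots,x_m))$ where $f\in\Delta S_+$ satisfies $\langle f,(x_1,\dots,x_m)\rangle=\mathbf{w}$, invoking the tensor-representation bijection of Prop.~\ref{prp.iso}. But that bijection only hits those sequences in which each of $x_1,\dots,x_m$ occurs \emph{exactly once}; a general object of $\mathcal{T}X^{\star}$, say $(x_3x_1,\,x_3)$, is not of this form for any $f$, since $x_3$ repeats and $x_2$ is absent. So your proposed inverse is simply undefined on most objects. The paper's inverse avoids this by recording the \emph{actual} indeterminates in their order of appearance: it sends $(y_1,\dots,y_n)$ to $(\phi,\,x_{i_1},\dots,x_{i_m})$, where $y_1\cdots y_n=x_{i_1}\cdots x_{i_m}$ in $X^{\star}$ and $\phi\in\Delta_+$ just records which $y$-slot each factor came from. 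Freeness of $X^{\star}$ (not Prop.~\ref{prp.iso}) is what makes this unambiguous; the permutation part of $f$ is deliberately taken trivial, and the $\Sigma_m$-quotient absorbs any reindexing ambiguity you were worried about.

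A smaller point: your description of objects versus morphisms on the left side is muddled. Objects of $\mathcal{F}(m)\times_{\Sigma_m}X^m$ are classes $(f,\mathbf{x})$ with $f\colon[m-1]\to[n-1]$ an \emph{arbitrary} $\Delta S_+$-morphism (not only $\mathrm{id}_{[m-1]}$), and $e$ sends such an object to $\langle f,\mathbf{x}\rangle$; morphisms are $(g,\mathrm{id}_{\mathbf{x}})\colon(f,\mathbf{x})\to(gf,\mathbf{x})$, and $e$ sends this to $g$ regarded as a morphism of $\mathcal{T}X^{\star}$. Once you correct the inverse as above and keep objects/morphisms straight, your argument coincides with the paper's.
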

\begin{proof}
  We must show that the evaluation functor,
  \begin{eqnarray}
    \mathcal{F}(m) \times X^m &\to& \mathcal{T}X^{\star}\\ (f,
    \mathbf{x}) &\mapsto& \langle f, \mathbf{x} \rangle,
  \end{eqnarray} 
  factors through the canonical projection $\mathcal{F}(m) \times X^m
  \to \mathcal{F}(m) \times_{\Sigma_m} X^m$.  Let $f$ be a $\Delta
  S_+$ morphism and write $f = \phi \circ \gamma$ with $\phi$
  a morphism of $\Delta_+$ and $\gamma \in \Sigma_m^{\mathrm{op}}$.  By
  unique factorization in $\Delta S_+$, the pair is unique to $f$.
  Let $\mathbf{x} \in X^m$, and let $\sigma \in \Sigma_m$.  Observe
  that Property~(\ref{eqn.eval_prop}) is used to ``transfer'' the
  permutation from the left to the right.
  \begin{eqnarray}
    \langle f \bullet \sigma, \mathbf{x} \rangle &=& \langle \phi \circ
    \sigma^{-1}\gamma, \mathbf{x} \rangle \\
    &=& \big\langle \phi, \langle
    \sigma^{-1}\gamma, \mathbf{x}\rangle \big\rangle\\
    &=& \langle \phi,
    \gamma^{-1}\sigma\mathbf{x} \rangle\\
    &=& \big\langle \phi, \langle
    \gamma, \sigma\mathbf{x} \rangle \big\rangle\\
    &=& \langle \phi \circ
    \gamma, \sigma\mathbf{x} \rangle \\
    &=&
    \langle f, \sigma \bullet \mathbf{x} \rangle.
  \end{eqnarray}
  There is also a map (on objects) in the reverse direction, defined
  by
  \begin{equation}
    (y_1, y_2, \ldots, y_n) \mapsto (\phi, x_{i_1}, x_{i_2}, \ldots,
    x_{i_m}),
  \end{equation}
  where each $y_i$ is a possibly-empty monomial in the indeterminates
  $x_j$, such that $y_1y_2 \cdots y_n = x_{i_1}x_{i_2} \cdots x_{i_m}
  \in X^{\star}$, and $\phi$ is the $\Delta_+$ morphism such that
  $\phi(j-1) = j'-1 \Leftrightarrow x_{i_j}$ appears as a factor in
  $y_{j'}$.  Whereas the map $e$ has the effect of multiplying certain
  groups of indeterminates together, the reverse map {\it factors} the
  monomials completely, which can be done uniquely since $X^{\star}$
  is a free monoid.  The two maps are inverse to one another, making
  $e$ bijective on objects.

  We have yet to define $e$ on morphisms.  Observe that since $X^m$ is
  discrete, the morphisms of $\mathcal{F}(m) \times_{\Sigma_m} X^m$
  all have the form $g \times \mathrm{id}_{x_{i_1}} \times \cdots
  \times \mathrm{id}_{x_{i_m}}$.  The functor $e$ simply maps this
  morphism to $g$ as interpreted in $\mathcal{T}X^{\star}$, as the
  commutative diagram~(\ref{eqn.e_on_morphisms}) illustrates.
  \begin{equation}\label{eqn.e_on_morphisms}
    \begin{tikzcd}
      (f, \mathbf{x}) \rar{e} \dar[swap]{ g \times \mathrm{id}} &
      \langle f, \mathbf{x}\rangle \dar{ g } \\ (gf, \mathbf{x})
      \rar{e} & \langle gf, \mathbf{x}\rangle.
    \end{tikzcd}
  \end{equation}
  It is straightforward to check that $e$ is fully faithful, and so
  $e$ is an isomorphism of categories as claimed.
\end{proof}

For the remainder of this section, we prove that the family of maps
$\mu$ defined in \S\ref{sub.structure_map} give $\mathcal{F}$ the
structure of an operad-module over $\mathscr{D}_{\mathbf{Cat}}$.  Fix
integers $m, j_1, j_2, \cdots, j_m \geq 0$, and let $j = \sum j_s$.
In this section $\mathbf{x} = (x_1, x_2, \ldots, x_j)$.  We will need
to partition $\mathbf{x}$ into chunks of sizes $j_1, j_2, \ldots,
j_s$.  To that end, define for each $s$,
\begin{eqnarray*}
  \mathbf{x}_1 &=& (x_1, \ldots, x_{j_1}),\\ \mathbf{x}_s &=& (x_{j_1
    + \cdots + j_{s-1} + 1} , \ldots, x_{j_1 + \cdots + j_s}), \quad
  \textrm{for $s > 1$.}
\end{eqnarray*}
For each number $s = 1, 2, \ldots, m$, let $a_s$ be the inclusion
functor,
\begin{equation}
  a_s \colon \mathcal{F}(j_s) \longrightarrow \mathcal{F}(j_s)
  \times_{\Sigma_{j_s}} X^{j_s},
\end{equation}
\begin{equation}
  f \mapsto (f, \mathbf{x}_s).
\end{equation}
We also require a similar functor,
\begin{equation}
  a \colon \mathcal{F}(j) \longrightarrow \mathcal{F}(j)
  \times_{\Sigma_{j}} X^{j},
\end{equation}
\begin{equation}
  f \mapsto \left(f, \mathbf{x}\right).
\end{equation}
Consider the functor (in which $\mathbf{a} = a_1 \times \ldots \times
a_m$),
\begin{equation}\label{eqn.alpha_tilde}
  \tilde{a} \stackrel{def}{=} \mathrm{id} \times \mathbf{a} \colon
  \mathscr{D}_{\mathbf{Cat}}(m) \times \prod_{s=1}^m \mathcal{F}(j_s)
  \longrightarrow \mathscr{D}_{\mathbf{Cat}} (m) \times \prod_{s=1}^m
  \left(\mathcal{F}(j_s) \times_{\Sigma_{j_s}} X^{j_s} \right).
\end{equation}
For any number $i \geq 0$, let $b_i$ be the inclusion of
categories:
\begin{equation}
  b_i \colon \mathcal{F}(i) \times_{\Sigma_{i}} X^{i}
  \longrightarrow \bigcup_{i \geq 0} \mathcal{F}(i) \times_{\Sigma_i}
  X^i,
\end{equation}
Define $\tilde{b}$ analogously to~(\ref{eqn.alpha_tilde}):
\begin{equation}
  \tilde{b} \stackrel{def}{=} \mathrm{id} \times \mathbf{b} \colon
  \mathscr{D}_{\mathbf{Cat}}(m) \times \prod_{s=1}^m
  \left(\mathcal{F}(j_s) \times_{\Sigma_{j_s}} X^{j_s} \right)
  \longrightarrow \mathscr{D}_{\mathbf{Cat}}(m) \times \left[
    \bigcup_{i \geq 0} \left(\mathcal{F}(i) \times_{\Sigma_i} X^i
    \right)\right]^m.
\end{equation}
Now, by Lemma~\ref{lem.TX^star}, there is an isomorphism,
\begin{equation}
  \tilde{e} \stackrel{def}{=} \mathrm{id} \times e^m \colon
  \mathscr{D}_{\mathbf{Cat}}(m) \times \left[ \bigcup_{i \geq 0}
    \left( \mathcal{F}(i) \times_{\Sigma_i} X^i \right)\right]^m
  \stackrel{\cong}{\longrightarrow} \mathscr{D}_{\mathbf{Cat}}(m)
  \times \left( \mathcal{T}X^{\star}\right)^m.
\end{equation}
Consider the following diagram.  The top row is the map $\mu$ of
Eq.~(\ref{eq.mu-objects}), and the bottom row is the operad-algebra
structure map for $\mathcal{T}X^{\star}$, which comes from the
$\mathscr{D}_{\mathbf{Cat}}$-algebra structure of this permutative
category (see Lemma~\ref{lem.TM_permutative}).
\begin{equation}\label{eq.operad--module-comm-diag}
  \begin{tikzcd}
    \ds{\mathscr{D}_{\mathbf{Cat}}(m) \times \prod_{s=1}^m
      \mathcal{F}(j_s)} \rar{\mu} \dar{\tilde{a}} & \mathcal{F}(j)
    \dar{a} \\ \ds{\mathscr{D}_{\mathbf{Cat}}(m) \times \prod_{s=1}^m
      \left(\mathcal{F}(j_s)\times_{\Sigma_{j_s}} X^{j_s} \right)}
    \dar{\tilde{b}} & \mathcal{F}(j) \times_{\Sigma_{j}} X^{j} \dar{b_j}
    \\ \ds{\mathscr{D}_{\mathbf{Cat}}(m) \times \left[ \bigcup_{i \geq
          0} \left( \mathcal{F}(i) \times_{\Sigma_i} X^i
        \right)\right]^m} \dar{\tilde{e}}& \ds{ \bigcup_{i \geq 0}
      \left(\mathcal{F}(i) \times_{\Sigma_i} X^i\right)}
    \dar{e}\\ \mathscr{D}_{\mathbf{Cat}}(m) \times \left(
    \mathcal{T}X^{\star}\right)^m \rar{\theta} & \mathcal{T}X^{\star}
  \end{tikzcd}
\end{equation}
Diagram~(\ref{eq.operad--module-comm-diag}) commutes if we can show
that $\theta\tilde{e}\tilde{b}\tilde{a} = eb_ja\mu$.  Let $w = (\sigma,
f_1, \ldots, f_m) \in \mathscr{D}_{\mathbf{Cat}}(m) \times
\prod_{s=1}^m \mathcal{F}(j_s)$ be arbitrary.  First follow the
element $w$ down the left column and across the bottom of
Diagram~(\ref{eq.operad--module-comm-diag}).
\begin{equation}\label{eq.left_column_on_w}
  \begin{tikzcd}
    (\sigma, f_1, \ldots, f_m) \dar{\tilde{b}\tilde{a}}
    \\ \left(\sigma, (f_1, \mathbf{x}_1), \ldots, (f_m, \mathbf{x}_m)\right)
    \dar{\tilde{e}}\\ \left( \sigma, \langle f_1, \mathbf{x}_1 \rangle,
    \ldots, \langle f_m, \mathbf{x}_m \rangle \right) \rar{\theta} & \langle
    f_{\sigma^{-1}(1)}, \mathbf{x}_{\sigma^{-1}(1)} \rangle \odot \cdots
    \odot \langle f_{\sigma^{-1}(m)}, \mathbf{x}_{\sigma^{-1}(m)}\rangle.
  \end{tikzcd}
\end{equation}
Now follow the element $w$ across the top and down the right column of
Diagram~(\ref{eq.operad--module-comm-diag}).  Assume the codomain of
$f_i$ is $[p_i-1]$ for each $i \leq m$.
\begin{equation}\label{eq.right_column_on_w}
  \begin{tikzcd}
    (\sigma, f_1, \ldots, f_m) \rar{\mu} & \sigma_{\mathbf{p}} \bullet
    \mathbf{f}^{\odot} \dar{b_ja} \\ &\left( \sigma_{\mathbf{p}} \bullet
    \mathbf{f}^{\odot}, \mathbf{x} \right) \dar{e}\\ & \langle
    \sigma_{\mathbf{p}} \bullet \mathbf{f}^{\odot}, \mathbf{x}
    \rangle.
  \end{tikzcd}
\end{equation}
Now since $\mathbf{x} = (\mathbf{x}_1, \ldots, \mathbf{x}_m)$, the
bottom right element in Diagram~(\ref{eq.right_column_on_w}) may be
simplified thus:
\begin{eqnarray}
  \langle \sigma_{\mathbf{p}} \bullet \mathbf{f}^{\odot}, \mathbf{x}
  \rangle &=& \big\langle (\mathrm{id}, \sigma_{\mathbf{p}}^{-1}) \circ
  \mathbf{f}^{\odot}, (\mathbf{x}_1, \ldots, \mathbf{x}_m)
  \big\rangle\\ &=& \big\langle (\mathrm{id}, \sigma_{\mathbf{p}}^{-1})
  \langle \mathbf{f}^{\odot}, (\mathbf{x}_1, \ldots,
  \mathbf{x}_m)\rangle \big\rangle\\ &=& \big\langle (\mathrm{id},
  \sigma_{\mathbf{p}}^{-1}), \langle f_1, \mathbf{x}_1 \rangle \odot \cdots
  \odot \langle f_m, \mathbf{x}_m\rangle \big\rangle\\ &=& \langle
  f_{\sigma^{-1}(1)}, \mathbf{x}_{\sigma^{-1}(1)} \rangle \odot \cdots
  \odot \langle f_{\sigma^{-1}(m)},
  \mathbf{x}_{\sigma^{-1}(m)}\rangle.
\end{eqnarray}
Using Diagram~(\ref{eq.operad--module-comm-diag}), we find that $\mu$
is an operad-module structure map.  Associativity is induced by the
associativity condition of the algebra structure map $\theta$ (because
both $eb_ja$ and $\tilde{e}\tilde{b}\tilde{a}$ are injective).  It is
trivial to verify the left unit condition (note, there is no
corresponding right unit condition in an operad-module structure).  We
include the routine check that verifies the equivariance condition on
the level of objects.  Assume $f_i \in \mathcal{F}(j_i)$ (for $1 \leq
i \leq m$) have specified sources and targets, $[j_i-1]
\stackrel{f_i}{\to} [p_i-1]$.  Recall, the symmetric group acts on the
         {\it right}.

{\bf Equivariance A:}
\begin{equation}\label{eqn.equiv_A}
  \begin{tikzcd}
    (\sigma, \mathbf{f}) \rar[mapsto]{ \mathrm{id} \times T_{\tau}
    } \dar[mapsto]{ \tau \times \mathrm{id} } & (\sigma,
    \tau\mathbf{f}) \dar[mapsto]{\mu}\\ (\sigma\tau, \mathbf{f})
    \dar[mapsto]{\mu} & \sigma_{\tau\mathbf{p}} \bullet
    \tau\mathbf{f}^{\odot} \dar[mapsto]{ \tau_{\mathbf{j}} }
    \\ (\sigma\tau)_{\mathbf{p}} \bullet \mathbf{f}^\odot \rar[equals]
    & \sigma_{\tau\mathbf{p}} \bullet \tau\mathbf{f}^{\odot} \bullet
    \tau_{\mathbf{j}}
  \end{tikzcd}
\end{equation}

{\bf Equivariance B:}
\begin{equation}\label{eqn.equiv_B}
  \begin{tikzcd}
    (\sigma, \mathbf{f}) \rar[mapsto]{\mu}
    \dar[mapsto][swap]{\mathrm{id} \times \tau_1 \times \cdots \times
      \tau_m} & \sigma_{\mathbf{p}} \bullet \mathbf{f}^\odot
    \arrow[mapsto]{dd}{\tau_1 \oplus \cdots \oplus \tau_m} \\ (\sigma,
    f_1 \bullet \tau_1, \ldots, f_m \bullet \tau_m) \dar[mapsto]{\mu}
    & \\ \sigma_{\mathbf{p}} \bullet \left( (f_1 \bullet \tau_1) \odot
    \cdots \odot (f_m \bullet \tau_m)\right) \rar[equals]&
    \sigma_{\mathbf{p}} \bullet \mathbf{f}^{\odot} \bullet
    \left(\tau_1 \oplus \cdots \oplus \tau_m\right)
  \end{tikzcd}
\end{equation}

\begin{remark}
  It can be verified that $\mathcal{F}$ is in fact a pseudo-operad.
  The details are left to the reader, as this result will not be used
  in the present paper.  Recall from~\cite{MSS} that a pseudo-operad
  is a `non-unitary' operad.  The structure maps are defined by the
  composition:
  \begin{equation}
    \begin{tikzcd}
      \ds{\mathcal{F}(m) \times \prod_{s=1}^m \mathcal{F}(j_s)}
      \rar{\pi \times \mathrm{id}} & \ds{\mathscr{D}_{\mathbf{Cat}}(m)
        \times \prod_{s=1}^m \mathcal{F}(j_s)} \rar{\mu} &
      \mathcal{F}(j_1 + \cdots + j_m),
    \end{tikzcd}
  \end{equation}
  where $\pi \colon\mathcal{F}(m) \to \mathscr{D}_{\mathbf{Cat}}(m)$
  is the projection functor defined by $\pi(\phi, \gamma) =
  \gamma^{-1}$.  Indeed, $\pi$ defines an isomorphism of the
  subcategory $\mathrm{Aut}\left([m-1] \setminus \Delta S_+\right)$
  onto $\mathscr{D}_{\mathbf{Cat}}(m)$.  Note, $\mathcal{F}$ is not a
  full operad, since it fails the right-unit condition.
\end{remark}

We shall denote the associated simplicial $k$-module
$\widetilde{\mathcal{F}} \stackrel{def}{=} k[N(- \setminus \Delta
  S_+)]$.
\begin{corollary}\label{cor.K_ch}
  There is a $\mathscr{D}_{\mathbf{Mod}}$-module structure on
  $\widetilde{\mathcal{F}}$.
\end{corollary}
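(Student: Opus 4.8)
The plan is to obtain the module structure on $\widetilde{\mathcal{F}}$ by transporting the operad-module structure on $\mathcal{F}$ through a composite of strong symmetric monoidal functors, in the same spirit as the induction of the $\mathscr{D}_{\mathbf{Cat}}$-algebra structure to simplicial sets in the proof of Lemma~\ref{lem.X-perm-cat}. We have just shown that the maps $\mu$ of \S\ref{sub.structure_map} make $\mathcal{F}$ an operad-module over $\mathscr{D}_{\mathbf{Cat}}$ in $\mathbf{Cat}$. The nerve functor $N\colon \mathbf{Cat} \to \mathbf{Set}^{\Delta^{\mathrm{op}}}$ is strong symmetric monoidal, since $N(\mathscr{C}\times\mathscr{D}) \cong N\mathscr{C}\times N\mathscr{D}$ naturally; applying $N$ to Diagram~(\ref{eq.mu-morphisms}) and to the operad-module axioms (associativity, left unit, and Equivariance~(\ref{eqn.equiv_A}) and~(\ref{eqn.equiv_B})) verified above shows that $N\mathcal{F}$ is an operad-module over $N\mathscr{D}_{\mathbf{Cat}}$ in simplicial sets, with $N\mathcal{F}$ regarded as an $\mathbf{S}^{\mathrm{op}}$-object via $[n] \mapsto N\mathcal{F}(n+1)$.

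Next I would apply the degreewise free $k$-module functor $k[-]$ from simplicial sets to simplicial $k$-modules, which is again strong symmetric monoidal because $k[S\times T] \cong k[S]\otimes_k k[T]$ for sets $S,T$, applied in each simplicial degree. This functor carries the operad $N\mathscr{D}_{\mathbf{Cat}}$ to the operad whose $m$-th component has $p$-simplices the free $k$-module on $\Sigma_m^{p+1}$, which is precisely the bar resolution $E_*\Sigma_m = \mathscr{D}_{\mathbf{Mod}}(m)$; the structure maps and the right $\Sigma_m$-action match those of $\mathscr{D}_{\mathbf{Mod}}$, so $k[N\mathscr{D}_{\mathbf{Cat}}] = \mathscr{D}_{\mathbf{Mod}}$. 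The module $N\mathcal{F}$ is then carried to $k[N\mathcal{F}] = k[N(-\setminus\Delta S_+)] = \widetilde{\mathcal{F}}$, which therefore inherits the structure of an operad-module over $\mathscr{D}_{\mathbf{Mod}}$ in simplicial $k$-modules, as claimed.

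The main obstacle is really just bookkeeping, and amounts to two verifications. First, one must confirm that $N$ and $k[-]$ are strong (not merely lax) symmetric monoidal, so that the finite products appearing in the sources of $\mu$ and of the algebra map $\theta$ are preserved on the nose and the operad-module diagrams transport verbatim rather than merely up to coherent natural transformation. Second, one must check that the right $\Sigma_m$-action on $\mathcal{F}(m)$ defined by~(\ref{eqn.right_action1})--(\ref{eqn.right_action2}) becomes, after passing to nerves and free $k$-modules, exactly the $\Sigma_m$-action on $E_*\Sigma_m$ used in the definition of $\mathscr{D}_{\mathbf{Mod}}$, so that the equivariance conditions of \S\ref{sub.operad-module-structure} survive. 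Both facts are immediate from the explicit descriptions already recorded, so no new computation is needed; the corollary is essentially a functoriality statement.
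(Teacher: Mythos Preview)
Your proposal is correct and follows exactly the paper's approach: the paper's proof is the one-line observation that the $\mathscr{D}_{\mathbf{Cat}}$-module structure on $\mathcal{F}$ is pushed forward along the chain of symmetric monoidal functors $\mathbf{Cat} \xrightarrow{N} \mathbf{Set}^{\Delta^{\mathrm{op}}} \xrightarrow{k[-]} k\textrm{-}\mathbf{Mod}^{\Delta^{\mathrm{op}}}$, and you have simply unpacked this. One small slip in your bookkeeping paragraph: the right $\Sigma_m$-action on $\mathcal{F}(m)$ becomes the $\Sigma_m$-action on $\widetilde{\mathcal{F}}(m)$, not on $E_*\Sigma_m$; it is the action on $\mathscr{D}_{\mathbf{Cat}}(m)=E\Sigma_m$ that becomes the action on $\mathscr{D}_{\mathbf{Mod}}(m)=E_*\Sigma_m$, and what survives is the \emph{compatibility} between these two actions encoded in the equivariance diagrams.
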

\begin{proof}
  The $\mathscr{D}_{\mathbf{Cat}}$-module structure of $\mathcal{F}$
  gets induced via the chain of symmetric monoidal functors,
  \begin{equation}
  \begin{tikzcd}
    \mathbf{Cat} \rar{N} & \mathbf{Set}^{\Delta^{\mathrm{op}}} \rar{k[
        - ]} & k\textrm{-}\mathbf{Mod}^{\Delta^{\mathrm{op}}}.
  \end{tikzcd}
  \end{equation}
\end{proof}

\subsection{Operad-algebra Structure}\label{sec.alg-structure} 

In this subsection we use the operad-module structure defined in \S
\ref{sub.operad-module-structure} to induce a related operad-algebra
structure.  Let us first recall a fact of operad theory:
\begin{proposition}\label{prop.operad--algebra}
  Suppose $(\mathscr{C}, \oplus, \odot)$ is a cocomplete distributive
  symmetric monoidal category, $\mathscr{P}$ is an operad in
  $\mathscr{C}$, $\mathscr{L}$ is a left $\mathscr{P}$-module, and $Z
  \in \mathrm{Obj}\mathscr{C}$.  Then
  \begin{equation}
    \mathscr{L} \langle Z \rangle \stackrel{def}{=} \bigoplus_{m \geq 0}
    \mathscr{L}(m) \odot_{\Sigma_m} Z^{\odot m}
  \end{equation}
  admits the structure of a $\mathscr{P}$-algebra.
\end{proposition}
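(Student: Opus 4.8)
The plan is to prove Proposition~\ref{prop.operad--algebra} by exhibiting the structure maps explicitly and then checking the operad-algebra axioms, following the familiar pattern for ``free $\mathscr{P}$-algebra on $Z$'' but with $\mathscr{P}$ replaced by the left module $\mathscr{L}$. First I would fix notation: write $\mathscr{L}\langle Z\rangle = \bigoplus_{m\ge 0}\mathscr{L}(m)\odot_{\Sigma_m}Z^{\odot m}$, where $\Sigma_m$ acts on the right of $\mathscr{L}(m)$ (since $\mathscr{L}\colon \mathbf{S}^{\mathrm{op}}\to\mathscr{C}$) and diagonally-by-permutation on the left of $Z^{\odot m}$, and the coend $\odot_{\Sigma_m}$ makes sense because $\mathscr{C}$ is cocomplete and $\odot$ distributes over $\oplus$. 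The $\mathscr{P}$-algebra structure map $\Theta_n\colon \mathscr{P}(n)\odot_{\Sigma_n}\mathscr{L}\langle Z\rangle^{\odot n}\to \mathscr{L}\langle Z\rangle$ is assembled, component by component, from the left-module structure maps $\lambda\colon \mathscr{P}(n)\odot\mathscr{L}(j_1)\odot\cdots\odot\mathscr{L}(j_n)\to\mathscr{L}(j_1+\cdots+j_n)$ of $\mathscr{L}$ over $\mathscr{P}$: on the summand indexed by a tuple $(j_1,\ldots,j_n)$ one sends $(\theta; (\ell_1,z_1^{\odot j_1}),\ldots,(\ell_n,z_n^{\odot j_n}))$ to $(\lambda(\theta;\ell_1,\ldots,\ell_n), z_1^{\odot j_1}\odot\cdots\odot z_n^{\odot j_n})$, reindexing the $Z$-factors by the evident block inclusion into $Z^{\odot(j_1+\cdots+j_n)}$.

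The key steps, in order, are: (1) verify that the formula above respects all three quotients — the $\Sigma_{j_s}$-actions used to form each $\mathscr{L}\langle Z\rangle$ summand, the $\Sigma_n$-action used to form $\mathscr{P}(n)\odot_{\Sigma_n}(-)^{\odot n}$, and the block-permutation/inter-block-permutation bookkeeping relating them — so that $\Theta_n$ is well defined on the coends; this is exactly the equivariance axioms of the left $\mathscr{P}$-module $\mathscr{L}$ (the analogues of {\bf Equivariance A} and {\bf Equivariance B} in \S\ref{sub.operad-module-structure}), combined with cofinality of the diagonal $\Sigma_n \to \prod \Sigma_{j_s}\rtimes\Sigma_n$ reindexing. (2) Check the associativity pentagon: composing $\Theta$ with itself in the two ways dictated by the operad composition $\gamma$ of $\mathscr{P}$ agrees, which reduces termwise to the associativity axiom of the $\mathscr{P}$-module structure on $\mathscr{L}$. (3) Check the unit axiom: the operad unit $1\in\mathscr{P}(1)$ acts as the identity, which follows from the left-unit axiom of $\mathscr{L}$ (the one noted as the only unit condition an operad-module carries). (4) Confirm $\Sigma_n$-equivariance of $\Theta_n$ in the $\mathscr{P}(n)$-variable, again inherited from $\mathscr{L}$.

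The main obstacle I expect is step~(1): getting the indexing coherences exactly right. One has to match up the right $\Sigma_{j_s}$-actions on the $\mathscr{L}(j_s)$ with the left permutation actions on the $Z^{\odot j_s}$, then push these through the left $\Sigma_n$-quotient, and show the net identification is precisely the one the module equivariance axioms provide — i.e.\ that the quotient $\bigoplus_{(j_1,\ldots,j_n)}\mathscr{L}(j_1+\cdots+j_n)\odot_{(\Sigma_{j_1}\times\cdots\times\Sigma_{j_n})\rtimes\Sigma_n} Z^{\odot(j_1+\cdots+j_n)}$ collapses to the $m$-indexed summands of $\mathscr{L}\langle Z\rangle$ with no overcounting. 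This is where the distributivity hypothesis on $(\mathscr{C},\oplus,\odot)$ is genuinely used, to commute the coproducts past $\odot$ and to identify the relevant coends. Once that combinatorial reindexing is pinned down, steps~(2)--(4) are diagram-chases that are formally identical to the classical proof that $\mathscr{P}\langle Z\rangle$ is a $\mathscr{P}$-algebra, with $\mathscr{L}$'s module axioms substituted for $\mathscr{P}$'s operad axioms; I would state them and refer to~\cite{MSS} for the routine verifications rather than grinding them out.
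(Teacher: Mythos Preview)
Your outline is correct and is the standard argument, but note that the paper does not actually prove this proposition: it is introduced with ``Let us first recall a fact of operad theory,'' stated without proof, and accompanied only by a remark pointing to Kapranov--Manin~\cite{KM} and the Schur functor discussion in~\cite{MSS}. So there is no ``paper's own proof'' to compare against; your proposal supplies exactly the verification the paper elects to omit. The plan you sketch --- define $\Theta_n$ componentwise via the module maps $\lambda$, use the two equivariance axioms of the left $\mathscr{P}$-module to show the map descends to the coends, and then read off associativity and the unit from the corresponding module axioms --- is precisely how one proves this, and your identification of step~(1) as the only place requiring care is accurate.
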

\begin{remark}
  The notation $\mathscr{L} \langle Z \rangle$ appears in Kapranov and
  Manin~\cite{KM} (where they use it in the category of vector
  spaces).  The concept is also present in~\cite{MSS} as the {\it
    Schur functor} of an operad (\cite{MSS}, Def~1.24).
\end{remark}

\begin{lemma}\label{lem.E_infty-algebra}
  The simplicial $k$-module $\widetilde{\mathcal{F}}
  \otimes_{\mathrm{Aut}\Delta S_+} B^{sym_+}_*A$ admits
  the structure of an $E_\infty$ algebra.
\end{lemma}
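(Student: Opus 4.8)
The plan is to apply Proposition~\ref{prop.operad--algebra} with $\mathscr{C}$ the category of simplicial $k$-modules, $\mathscr{P} = \mathscr{D}_{\mathbf{Mod}}$, $\mathscr{L} = \widetilde{\mathcal{F}}$, and $Z = B^{sym_+}_*A$, and then to recognize the resulting Schur-functor construction $\widetilde{\mathcal{F}}\langle B^{sym_+}_*A\rangle$ as the desired tensor product over $\mathrm{Aut}\Delta S_+$. By Corollary~\ref{cor.K_ch}, $\widetilde{\mathcal{F}}$ is a left $\mathscr{D}_{\mathbf{Mod}}$-module, so the only real content is twofold: first, that the category of simplicial $k$-modules is a cocomplete distributive symmetric monoidal category (this is standard --- colimits are computed levelwise, and the levelwise tensor product of $k$-modules distributes over direct sums), and second, the identification
\begin{equation}
  \widetilde{\mathcal{F}}\langle B^{sym_+}_*A\rangle \;=\; \bigoplus_{m \geq 0} \widetilde{\mathcal{F}}(m) \otimes_{\Sigma_m} \bigl(B^{sym_+}_*A\bigr)^{\otimes m} \;\cong\; \widetilde{\mathcal{F}} \otimes_{\mathrm{Aut}\Delta S_+} B^{sym_+}_*A.
\end{equation}

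First I would pin down the left-hand side. Here $\widetilde{\mathcal{F}}(m) = k[N([m-1]\setminus \Delta S_+)]$ carries the right $\Sigma_m \cong \mathrm{Aut}_{\Delta S_+}([m-1])^{\mathrm{op}}$-action coming from the $\mathbf{S}^{\mathrm{op}}$-structure of Section~\ref{sub.structure_map}, while $\bigl(B^{sym_+}_*A\bigr)^{\otimes m}$ carries the left $\Sigma_m$-action permuting the $m$ tensor factors; the coend over $\mathrm{Aut}\Delta S_+$ that defines $\widetilde{\mathcal{F}} \otimes_{\mathrm{Aut}\Delta S_+} B^{sym_+}_*A$ decomposes, object by object of $\mathrm{Aut}\Delta S_+$, as exactly the direct sum over $m$ of these $\Sigma_m$-coinvariants, since $\mathrm{Aut}\Delta S_+$ has one object $[m-1]$ for each $m \geq 0$ with automorphism group $\Sigma_m^{\mathrm{op}}$. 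Matching the left $\mathscr{D}_{\mathbf{Mod}}$-module structure used in Proposition~\ref{prop.operad--algebra} with the one of Corollary~\ref{cor.K_ch} (i.e. the one induced from $\mu$) is then a bookkeeping check that the two $\Sigma_m$-actions being quotiented are the same; the slightly delicate point is the inversion $\sigma \mapsto \sigma^{-1}$ built into $\mathcal{F}$ being an $\mathbf{S}^{\mathrm{op}}$-object, which is precisely what makes a \emph{right} $\Sigma_m$-action on $\widetilde{\mathcal{F}}(m)$ pair correctly with the \emph{left} permutation action on $Z^{\otimes m}$.

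Once this identification is in place, Proposition~\ref{prop.operad--algebra} immediately endows $\widetilde{\mathcal{F}} \otimes_{\mathrm{Aut}\Delta S_+} B^{sym_+}_*A$ with a $\mathscr{D}_{\mathbf{Mod}}$-algebra structure; and since $\mathscr{D}_{\mathbf{Mod}}$ is an $E_\infty$ operad in simplicial $k$-modules (each $E_*\Sigma_m$ being a free resolution of $k$ with free $\Sigma_m$-action, hence equivalent to the unit $\underline{k}$ with free action), the algebra is an $E_\infty$ algebra, as claimed. The main obstacle I expect is not any single hard estimate but rather getting all the variance conventions to line up: the paper has deliberately threaded several ``reversals'' (the opposite symmetric groups in $\Delta S$, the duality $\mathbf{S}^{\mathrm{op}} \to \mathbf{S}$, left versus right actions with the $\bullet$ notation), and the proof must verify that the $\Sigma_m$-action on $\widetilde{\mathcal{F}}(m)$ appearing in the coend is genuinely the one for which Corollary~\ref{cor.K_ch}'s module structure was constructed, so that Proposition~\ref{prop.operad--algebra} applies verbatim rather than up to a twist. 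I would therefore spend the bulk of the write-up making that compatibility explicit and leave the distributivity and $E_\infty$-ness of $\mathscr{D}_{\mathbf{Mod}}$ as cited facts.
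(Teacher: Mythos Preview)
Your overall plan---combine Corollary~\ref{cor.K_ch} with Proposition~\ref{prop.operad--algebra}---is exactly the paper's argument, and the paper's proof is essentially the two-line version of what you wrote. However, your choice $Z = B^{sym_+}_*A$ is a type error, and the displayed identification is not correct as written.

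The object $B^{sym_+}_*A$ is a $\Delta S_+$-module (a functor $\Delta S_+ \to k\textrm{-}\mathbf{Mod}$), not a simplicial $k$-module, so it is not an object of the $\mathscr{C}$ you name. More importantly, the coend over $\mathrm{Aut}\Delta S_+$ does not produce the $m$-fold tensor power $(B^{sym_+}_*A)^{\otimes m}$ in the $m$th summand; it \emph{evaluates} $B^{sym_+}_*A$ at the object $[m-1]$, giving $B^{sym_+}_*A([m-1]) = A^{\otimes m}$. The correct identification (and the one the paper uses) is
\[
  \widetilde{\mathcal{F}} \otimes_{\mathrm{Aut}\Delta S_+} B^{sym_+}_*A
  \;=\; \bigoplus_{m \geq 0} \widetilde{\mathcal{F}}(m) \otimes_{\Sigma_m} A^{\otimes m}
  \;=\; \widetilde{\mathcal{F}}\langle A \rangle,
\]
with $Z = A$ regarded as a constant simplicial $k$-module. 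The $\Sigma_m$-action on $A^{\otimes m}$ appearing in the Schur functor is literally the restriction of the $\Delta S_+$-module structure of $B^{sym_+}_*A$ to automorphisms of $[m-1]$ (Eqns.~(\ref{eqn.eval_g_1})--(\ref{eqn.eval_g_2})), so the variance compatibility you flag as the main obstacle becomes immediate once $Z$ is chosen correctly. With that single correction your proposal matches the paper's proof.
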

\begin{proof}
  One may identify: 
  \begin{equation}
    \widetilde{\mathcal{F}} \otimes_{\mathrm{Aut}\Delta S_+}
    B^{sym_+}_*A = \bigoplus_{n \geq 0}
    \widetilde{\mathcal{F}}(n) \otimes_{\Sigma_n} A^{\otimes n} =
    \widetilde{\mathcal{F}} \langle A \rangle.
  \end{equation}
  The result then follows directly from Cor.~\ref{cor.K_ch} and
  Prop.~\ref{prop.operad--algebra}.
\end{proof}

In what follows, denote $CA_* \stackrel{def}{=} C_*(\Delta S_+,
B^{sym_+}_*A)$, the simplicial $k$-module defined in
Eqn.~(\ref{eqn.GZ-chain}).  Note that
\begin{equation}
  CA_* = \widetilde{\mathcal{F}} \otimes_{\Delta S_+} B^{sym_+}_*A.
\end{equation}
The inclusion $\mathrm{Aut}\Delta S_+ \hookrightarrow \Delta S_+$
induces a quotient map $Q \colon \widetilde{F}\langle A \rangle \to
CA_*$.

\begin{lemma}\label{lem.E_infty-algebra_k-modules}
  The $\mathscr{D}_{\mathbf{Mod}}$-algebra structure on
  $\widetilde{\mathcal{F}} \otimes_{\mathrm{Aut}\Delta S_+}
  B^{sym_+}_*A$ induces a $\mathscr{D}_{\mathbf{Mod}}$-algebra
  structure on $CA_*$, which implies that $CA_*$ is an $E_{\infty}$
  algebra in the category of simplicial $k$-modules.
\end{lemma}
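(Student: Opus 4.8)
The plan is to show that the quotient map $Q\colon \widetilde{\mathcal{F}}\langle A\rangle \to CA_*$ is compatible with the operad action, and that this action descends to the quotient. Recall that $\widetilde{\mathcal{F}}\langle A\rangle = \widetilde{\mathcal{F}}\otimes_{\mathrm{Aut}\Delta S_+}B^{sym_+}_*A$ carries a $\mathscr{D}_{\mathbf{Mod}}$-algebra structure by Lemma~\ref{lem.E_infty-algebra}, coming via the Schur-functor construction of Prop.~\ref{prop.operad--algebra} applied to the $\mathscr{D}_{\mathbf{Mod}}$-module $\widetilde{\mathcal{F}}$ of Cor.~\ref{cor.K_ch}. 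The target $CA_* = \widetilde{\mathcal{F}}\otimes_{\Delta S_+}B^{sym_+}_*A$ is the further quotient obtained by coequalizing the action of all of $\Delta S_+$ rather than just its automorphisms; concretely, $Q$ identifies $(\xi\cdot f)\otimes v$ with $\xi\otimes\langle f,v\rangle$ for every non-invertible morphism $f$ of $\Delta S_+$ as well, where $\xi$ ranges over $\widetilde{\mathcal{F}}$ and $v$ over $B^{sym_+}_*A$.

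First I would recall the explicit form of the structure map $\Theta\colon \mathscr{D}_{\mathbf{Mod}}(m)\otimes (\widetilde{\mathcal{F}}\langle A\rangle)^{\otimes m}\to \widetilde{\mathcal{F}}\langle A\rangle$. On a representative $d\otimes(\xi_1\otimes v_1)\otimes\cdots\otimes(\xi_m\otimes v_m)$, with $\xi_s\in\widetilde{\mathcal{F}}(j_s)$ and $v_s\in A^{\otimes j_s}$, this map is built from the operad-module maps $\mu$ of \S\ref{sub.structure_map}: it sends the $\xi_s$-part through $\mu(d,\xi_1,\ldots,\xi_m)\in\widetilde{\mathcal{F}}(j)$ and concatenates $v_1\otimes\cdots\otimes v_m\in A^{\otimes j}$, all modulo the diagonal $\Sigma_{j_1}\times\cdots\times\Sigma_{j_m}$ and $\Sigma_m$ actions. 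Then I would check directly that $Q\circ\Theta$ factors through $\mathrm{id}\otimes Q^{\otimes m}$. The key point is that $\mu$ was constructed (Diagram~\ref{eq.mu-morphisms}) precisely by "untwist, apply morphisms in natural order, retwist by block permutation": if one of the $\xi_s$ is replaced by $\xi_s\cdot f_s$ for a general $\Delta S_+$-morphism $f_s\colon[j_s-1]\to[p_s-1]$, then $\mu(d,\ldots,\xi_s\cdot f_s,\ldots)$ equals $\mu(d,\ldots,\xi_s,\ldots)$ post-composed with the block morphism $\mathrm{id}\oplus\cdots\oplus f_s\oplus\cdots\oplus\mathrm{id}$ of $\Delta S_+$, suitably conjugated by block permutations to account for the reordering by $d$. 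Evaluating that block morphism against the concatenated tensor $v_1\otimes\cdots\otimes v_m$ exactly reproduces the relation $\xi_s\cdot f_s\otimes v_s \sim \xi_s\otimes\langle f_s,v_s\rangle$ after passing to $CA_*$. Hence $\Theta$ descends to a map $\overline{\Theta}\colon \mathscr{D}_{\mathbf{Mod}}(m)\otimes CA_*^{\otimes m}\to CA_*$.

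It remains to verify that $\overline{\Theta}$ satisfies the operad-algebra axioms (associativity, unit, equivariance). These are inherited from the corresponding axioms for $\Theta$ on $\widetilde{\mathcal{F}}\langle A\rangle$ because $Q$ is a (componentwise) epimorphism of simplicial $k$-modules and $\Theta$ already satisfies them; one simply chases each axiom diagram through $Q$, using surjectivity to conclude the quotient version. Since $\mathscr{D}_{\mathbf{Mod}}$ is an $E_\infty$ operad in simplicial $k$-modules, $CA_*$ is thereby an $E_\infty$ algebra in that category. The main obstacle I anticipate is bookkeeping in the first step: carefully tracking how the block permutations $\sigma_{\mathbf{p}}$ in the definition of $\mu$ interact with replacing $\xi_s$ by $\xi_s\cdot f_s$ (whose target has size $p_s$ rather than $j_s$), so that the resulting morphism of $\Delta S_+$ really is a concatenation of the $f_s$'s conjugated by a single block permutation, and so that evaluation against $\mathbf{x}=(\mathbf{x}_1,\ldots,\mathbf{x}_m)$ genuinely localizes to each $\mathbf{x}_s$. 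This is essentially a repackaging of the computation already carried out in Diagram~\ref{eq.operad--module-comm-diag} and the displayed simplification following Diagram~\ref{eq.right_column_on_w}, now read with general morphisms $f_s$ in place of the evaluation maps there, so the verification is routine though notation-heavy.
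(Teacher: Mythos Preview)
Your proposal is correct and follows essentially the same route as the paper: show that the structure map $\nu$ on $\widetilde{\mathcal F}\langle A\rangle$ descends along the surjection $Q$ to $CA_*$ by verifying directly that $\nu$ applied to $(\sigma,\,g_1f_1\otimes V_1,\ldots,g_nf_n\otimes V_n)$ and to $(\sigma,\,g_1\otimes\langle f_1,V_1\rangle,\ldots,g_n\otimes\langle f_n,V_n\rangle)$ agree in $CA_*$, with the operad-algebra axioms inherited via surjectivity of $Q$. Two small remarks: the paper simplifies by noting it suffices to check this in simplicial degree~$0$ (the higher degrees being forced by face and degeneracy maps), and the ``conjugation by block permutations'' you anticipate is in fact unnecessary, since $\sigma_{\mathbf p}\bullet(-)$ acts by postcomposition on the target while $\mathrm{id}\odot\cdots\odot f_s\odot\cdots\odot\mathrm{id}$ acts by precomposition on the source, so they commute on the nose.
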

\begin{proof}
  Let $\nu$ be the structure map implied by
  Lemma~\ref{lem.E_infty-algebra} (which is ultimately induced by the
  structure map $\mu$ of \S \ref{sub.structure_map}):
  \begin{equation}  
    \nu \colon\mathscr{D}_{\mathbf{Mod}}(n) \otimes_{\Sigma_n}
    \left(\widetilde{\mathcal{F}}\langle A \rangle\right)^{\otimes n}
    \longrightarrow \widetilde{\mathcal{F}}\langle A \rangle.
  \end{equation}
  We will show that $\nu$ remains well-defined upon passing to the
  quotient, as illustrated in Diagram~(\ref{eqn.passing_to_quotient}).
  \begin{equation}\label{eqn.passing_to_quotient}
  \begin{tikzcd}
    \mathscr{D}_{\mathbf{Mod}}(n) \otimes_{\Sigma_n}
    \widetilde{\mathcal{F}}\langle A \rangle^{\otimes n} \rar{\nu}
    \dar{\mathrm{id} \otimes Q^{\otimes n}} & \widetilde{\mathcal{F}}
    \langle A \rangle \dar{Q} \\ \mathscr{D}_{\mathbf{Mod}}(n)
    \otimes_{\Sigma_n} \left( CA_* \right)^{\otimes n} \rar{\nu} &CA_*
  \end{tikzcd}
  \end{equation}

  It suffices to check that the structure is well-defined in degree
  $0$, because the face and degeneracy maps are induced by
  compositions and evaluations of $\Delta S_+$ morphisms.  A generator
  of $\mathscr{D}_{\mathbf{Mod}}(n) \otimes_{\Sigma_n} (CA_*)^{\otimes
    n}$ in degree $0$ has the following form:
  \begin{equation}\label{eq.D--algebra-invarianceL}
    \sigma \otimes \left( g_1f_1 \otimes V_1 \right) \otimes \cdots
    \otimes \left( g_nf_n \otimes V_n \right),
  \end{equation}
  where $\sigma \in \Sigma_n$, $f_i$, $g_i$ ($1 \leq i \leq n$) are
  morphisms of $\Delta S_+$ with specified sources and targets,
  $[m_i-1] \stackrel{f_i}{\to} [p_i-1] \stackrel{g_i}{\to} [q_i-1]$,
  and $V_i \in A^{\otimes m_i}$.  The map $\nu$ sends the
  element~(\ref{eq.D--algebra-invarianceL}) to $(\sigma_{\mathbf{q}}
  \bullet \mathbf{gf}^{\odot}) \otimes (V_{1} \otimes \cdots \otimes
  V_{n})$. On the other hand,
  element~(\ref{eq.D--algebra-invarianceL}) is equal (under $\Delta
  S_+$-equivariance) to:
  \begin{equation}\label{eq.D--algebra-invarianceR}
    \sigma \otimes ( g_1 \otimes \langle f_1, V_1 \rangle) \otimes
    \cdots \otimes ( g_n \otimes \langle f_n, V_n\rangle),
  \end{equation}
  and $\nu$ sends~(\ref{eq.D--algebra-invarianceR}) to:
  \begin{eqnarray}
    \label{eqn.mu(w)1}
    (\sigma_{\mathbf{q}} \bullet \mathbf{g}^{\odot} ) \otimes (
    \langle f_{1}, V_{1} \rangle \otimes \cdots \otimes \langle f_{n},
    V_{n} \rangle) &=& (\sigma_{\mathbf{q}} \bullet \mathbf{g}^{\odot}
    ) \otimes \langle \mathbf{f}^{\odot}, V_{1} \otimes \cdots \otimes
    V_{n} \rangle \\ 
    \label{eqn.mu(w)2}
    &=& (\sigma_{\mathbf{q}} \bullet
    \mathbf{gf}^{\odot}) \otimes (V_{1} \otimes \cdots \otimes V_{n}).
  \end{eqnarray}
\end{proof}

\begin{theorem}\label{thm.homology_operations}
  When the ground ring $k = \F_p$ for a prime $p$, symmetric homology
  $HS_*(A)$ admits Dyer-Lashof homology operations.
\end{theorem}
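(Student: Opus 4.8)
The plan is to invoke the standard machinery of Dyer--Lashof operations for $E_\infty$ algebras over $\F_p$ and apply it to the chain complex $CA_*$. First I would recall the result of the previous lemma (Lemma~\ref{lem.E_infty-algebra_k-modules}): the simplicial $\F_p$-module $CA_*$ is an $E_\infty$ algebra in the category of simplicial $\F_p$-modules, with structure maps coming from the Barratt--Eccles operad $\mathscr{D}_{\mathbf{Mod}}$. Applying the normalized chains (Moore complex) functor, which is lax symmetric monoidal by the Eilenberg--Zilber/shuffle map, transports this to an $E_\infty$ structure over the chain-level operad $\mathscr{D}_{\mathbf{Ch}^+_\bullet}(m) = E_*\Sigma_m$ on the non-negatively graded $\F_p$-complex computing $HS_*(A)$. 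Passing to homology, $HS_*(A)$ becomes an algebra over the homology of this operad.

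Next I would observe that since each $E_*\Sigma_m$ is a free $\F_p[\Sigma_m]$-resolution of $\F_p$, the $E_\infty$ operad $\mathscr{D}_{\mathbf{Ch}^+_\bullet}$ is an $\F_p$-chain model for the symmetric-group operad computing the Dyer--Lashof algebra; this is exactly the setting of the classical work of Araki--Kudo, Dyer--Lashof, Cohen--Lada--May, and the algebraic treatment in May's ``A general algebraic approach to Steenrod operations''~\cite{M}, as well as the Barratt--Eccles operad computations of~\cite{BE}. Consequently the homology of any $\mathscr{D}_{\mathbf{Ch}^+_\bullet}$-algebra over $\F_p$ carries natural homology operations $Q^i$ (for $p=2$) or $Q^s$ and $\beta Q^s$ (for $p$ odd), satisfying the usual Adem relations, Cartan formula (with respect to the Pontryagin product induced by the operad's binary operation), instability, and the unstable relations with the bottom operation. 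I would simply quote this and conclude that $HS_*(A)$ inherits all of these operations.

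The only real content beyond citation is checking that the $E_\infty$ structure we have produced meets the hypotheses these theorems require: namely that the $\Sigma_m$-actions on the operad components are free (which is built into our definition of $E_\infty$ operad in \S\ref{sub.he-operads}, and holds for $E_*\Sigma_m$), that each component is equivalent to the unit (again part of our definition, since $E_*\Sigma_m \simeq \F_p$), and that the operad acts on a non-negatively graded complex (true for the Moore complex of $CA_*$). I would also note that the Pontryagin product of Theorem (the forthcoming product structure) is precisely the $\mu$-component of the operad action at arity $2$, so the Cartan formula is automatic. The main obstacle — really the only place where care is needed — is the lax monoidality bookkeeping: verifying that the shuffle map is sufficiently compatible with the operad structure maps so that the $E_\infty$ structure genuinely descends to the normalized chain complex, rather than merely to the unnormalized one; this is standard (see~\cite{BE,M}) but should be stated. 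Given these checks, the Dyer--Lashof operations on $HS_*(A)$ follow immediately.

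\begin{proof}
By Lemma~\ref{lem.E_infty-algebra_k-modules}, $CA_*$ is an $E_\infty$ algebra in simplicial $k$-modules over the operad $\mathscr{D}_{\mathbf{Mod}}$, whose components $E_*\Sigma_m$ are free resolutions of $k$ over $k[\Sigma_m]$ with free $\Sigma_m$-action. Applying the normalized chains functor, which is lax symmetric monoidal via the shuffle map~\cite{GJ}, yields an $E_\infty$ structure over $\mathscr{D}_{\mathbf{Ch}^+_\bullet}$ on the non-negatively graded $k$-complex whose homology is $HS_*(A)$; compatibility of the shuffle map with the operad structure maps is standard~\cite{BE,M}. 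When $k = \F_p$, the nerve of $\mathscr{D}_{\mathbf{Cat}}$ is the Barratt--Eccles operad, and $\mathscr{D}_{\mathbf{Ch}^+_\bullet}(m) = E_*\Sigma_m$ is precisely the $\F_p$-chain model used in the algebraic construction of Steenrod and Dyer--Lashof operations (see May~\cite{M} and Berger--Fresse~\cite{BE}). Hence by the general theory, the homology of any algebra over this operad carries natural operations: for $p = 2$, operations $Q^i$; for $p$ odd, operations $Q^s$ and $\beta Q^s$. Applying this to the complex computing $HS_*(A)$ endows $HS_*(A)$ with Dyer--Lashof homology operations, satisfying the Adem relations, the Cartan formula with respect to the Pontryagin product induced by the arity-$2$ structure map, and the instability conditions.
\end{proof}
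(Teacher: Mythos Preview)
Your proposal is correct and follows essentially the same approach as the paper: invoke Lemma~\ref{lem.E_infty-algebra_k-modules} to obtain the $E_\infty$ structure on $CA_*$, then cite the standard Dyer--Lashof machinery for $E_\infty$ algebras over $\F_p$. Two minor citation slips: the reference you describe as ``A general algebraic approach to Steenrod operations'' is~\cite{M2}, not~\cite{M}, and~\cite{BE} is Barratt--Eccles, not Berger--Fresse.
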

\begin{proof}
  This is an immediate result of
  Lemma~\ref{lem.E_infty-algebra_k-modules} and the fact that
  $HS_*(A)$ is the homology of $CA_*$.  The reader is referred to
  Dyer-Lashof~\cite{Dyer-Lashof}, May~\cite{M2}, or chapter I
  of~\cite{CLM}, for details on constructing the operations on any
  $E_{\infty}$ algebra.
\end{proof}

\section{Product Structure}
\label{sec.homology-operations}      

\subsection{Pontryagin Product}\label{sub.products}      

There is a well-defined graded-commutative product on the graded
$k$-module, $\{ HS_i(A)\}_{i \geq 0}$.
\begin{theorem}\label{thm.pontryagin}
  $HS_*(A)$ admits a Pontryagin product, giving it the structure of
  associative, graded commutative algebra.
\end{theorem}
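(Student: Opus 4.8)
The plan is to read off the product --- and, more importantly, its three defining properties --- directly from the $E_\infty$-algebra structure on $CA_*$ produced in Lemma~\ref{lem.E_infty-algebra_k-modules}. Since $H_*(CA_*)\cong HS_*(A)$ by~(\ref{eqn.GZ-HS(A)}), it suffices to make $CA_*$ (equivalently its Moore complex) into a dg-$k$-algebra that is unital, associative, and graded-commutative up to chain homotopy. Concretely, let $\theta_2\colon \mathscr{D}_{\mathbf{Mod}}(2)\otimes_{\Sigma_2}(CA_*)^{\otimes 2}\to CA_*$ be the arity-$2$ structure map of the $\mathscr{D}_{\mathbf{Mod}}$-algebra $CA_*$, and let $\iota=\mathrm{id}\in\mathscr{D}_{\mathbf{Mod}}(2)_0=k[\Sigma_2]$. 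Inserting $\iota$ yields a map of simplicial $k$-modules $m_2\colon(CA_*)^{\otimes 2}\to CA_*$; since the normalization functor $N$ is lax symmetric monoidal via the Eilenberg--Zilber shuffle map, $m_2$ makes the Moore complex $NCA_*$ into a dg-$k$-algebra, and the resulting pairing $HS_i(A)\otimes HS_j(A)\to HS_{i+j}(A)$ uses only the Künneth cross product, so no hypothesis on $k$ beyond commutativity is needed.

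The unit is the homology class of the $0$-cycle $u\in CA_0$ obtained from the nullary structure map $\mathscr{D}_{\mathbf{Mod}}(0)\to CA_*$; concretely, $u$ is the image under the quotient $Q\colon\widetilde{\mathcal{F}}\langle A\rangle\to CA_*$ of the vertex $\mathrm{id}_{[-1]}$ of $N([-1]\setminus\Delta S_+)$, which lies in the arity-$0$ summand $\widetilde{\mathcal{F}}(0)$. That $m_2$ is associative already on the chain level, and that $[u]$ is a two-sided unit for it, both follow from the operad axioms together with the fact that the structure maps of $\mathscr{D}_{\mathbf{Mod}}$ act on $0$-simplices by block permutations: the two ways of composing $\iota$ with itself in the operad, which govern $m_2\circ(m_2\otimes\mathrm{id})$ and $m_2\circ(\mathrm{id}\otimes m_2)$, both equal the identity permutation in $\Sigma_3\subset\mathscr{D}_{\mathbf{Mod}}(3)_0$, so these two triple products coincide as maps of simplicial modules, and coherence of the shuffle map transports this to $NCA_*$; the unit identities are verified the same way, using the arity-$0$ element and the operad unit $1\in\mathscr{D}_{\mathbf{Mod}}(1)$. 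Thus $HS_*(A)$ is already a unital associative $k$-algebra.

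For graded-commutativity I would use acyclicity of $\mathscr{D}_{\mathbf{Mod}}(2)=E_*\Sigma_2$. The generators $\iota=\mathrm{id}$ and $\tau$ of $\mathscr{D}_{\mathbf{Mod}}(2)_0=k[\Sigma_2]$ are homologous $0$-cycles, with $\partial(\mathrm{id},\tau)=\tau-\iota$ for the $1$-simplex $(\mathrm{id},\tau)\in\mathscr{D}_{\mathbf{Mod}}(2)_1$; feeding $(\mathrm{id},\tau)$ into $\theta_2$ produces a degree-$1$ operation whose boundary is $\theta_2(\tau;-,-)-m_2$. By $\Sigma_2$-equivariance of $\theta_2$ and the Koszul sign in the coinvariants one has $\theta_2(\tau;a,b)=(-1)^{|a||b|}m_2(b,a)$, so this degree-$1$ operation is a chain homotopy between $m_2$ and $m_2$ precomposed with the signed swap of $(CA_*)^{\otimes 2}$. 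Hence $[a]\cdot[b]=(-1)^{|a||b|}[b]\cdot[a]$ on homology, which finishes the three algebra axioms.

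The mathematical content lies entirely in Lemma~\ref{lem.E_infty-algebra_k-modules}; what remains is routine, and I expect the only delicate point to be keeping the sign conventions of~\S\ref{sub.notation}, the Koszul signs in the shuffle map, and the signs in the $\Sigma_2$-coinvariants mutually consistent in the commutativity argument --- a non-issue when $2=0$ in $k$, and in any case avoidable by quoting the standard machinery of~\cite{CLM,M2} for $E_\infty$-algebras. It would also be worth remarking that when $A=k[G]$ this product corresponds, under the isomorphism $HS_*(k[G])\cong H_*(\Omega\Omega^\infty S^\infty BG;k)$ of~\cite{F,A}, to the Pontryagin product on the homology of that loop space, which accounts for the terminology.
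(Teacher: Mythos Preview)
Your proposal is correct and follows the same approach as the paper: both deduce the product from the $\mathscr{D}_{\mathbf{Mod}}$-algebra structure of Lemma~\ref{lem.E_infty-algebra_k-modules} by inserting a degree-$0$ generator of $\mathscr{D}_{\mathbf{Mod}}(2)$ into the arity-$2$ structure map. The paper's proof is terse (essentially just the definition of the product), whereas you spell out why associativity, unit, and graded-commutativity hold---useful elaboration, and your observation that strict chain-level associativity already holds because the two operadic compositions of $\mathrm{id}\in\Sigma_2$ coincide in $\Sigma_3$ is a nice point the paper leaves implicit.
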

\begin{proof}
  This follows directly from
  Lemma~\ref{lem.E_infty-algebra_k-modules}. The product is defined
  by:
  \begin{eqnarray*}
    (CA_*) \otimes (CA_*) &\hookrightarrow&
    \mathscr{D}_{\mathbf{Mod}}(2) \otimes_{\Sigma_2} (CA_*)^{\otimes
      2} \stackrel{\nu}{\to} (CA_*)\\ x \otimes y &\mapsto& c \otimes
    (x \otimes y) \mapsto \nu(c \otimes (x \otimes y)),
  \end{eqnarray*}
  where $\nu$ is defined in diagram~(\ref{eqn.passing_to_quotient}),
  and $c \in \mathscr{D}_{\mathbf{Mod}}(2)$ is a generator as a free
  $k$-module.
\end{proof}


\begin{corollary}\label{cor.trivialHS}
  Let $A$ be a unital associative $k$-algebra.  If the ideal
  generated by the commutator submodule is equal to the entire algebra
  ({\it i.e.} $\left( [A,A] \right) = A$), then $HS_*(A)$ is trivial
  in all degrees.
\end{corollary}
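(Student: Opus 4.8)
The plan is to deduce the vanishing of all of $HS_*(A)$ from the vanishing of its multiplicative unit. By Theorem~\ref{thm.pontryagin}, $HS_*(A)$ is a unital, associative, graded-commutative algebra, and since symmetric homology is concentrated in non-negative degrees its unit $1$ lies in $HS_0(A)$. The Pontryagin product preserves total degree, so once I know $1 = 0$ in $HS_0(A)$, every homogeneous class $x \in HS_n(A)$ satisfies $x = 1 \cdot x = 0 \cdot x = 0$, and $HS_*(A)$ vanishes in every degree. Thus the corollary is reduced to the single assertion that $HS_0(A) = 0$.

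The next step is to identify $HS_0(A)$. Since $HS_0(A) = \mathrm{Tor}_0^{\Delta S_+}(\underline{k}, B^{sym_+}_*A) = \underline{k} \otimes_{\Delta S_+} B^{sym_+}_*A = \colim_{\Delta S_+} B^{sym_+}_*A$, it is the coequalizer of $\bigoplus_f B^{sym_+}_n A \rightrightarrows \bigoplus_n B^{sym_+}_n A$ over all morphisms $f$ of $\Delta S_+$ (see also~\cite{A}). Evaluating at the unique $\Delta_+$-morphism $[n] \to [0]$ collapses each $B^{sym_+}_n A = A^{\otimes(n+1)}$ onto $A = B^{sym_+}_0 A$ via $a_0 \otimes \cdots \otimes a_n \mapsto a_0 a_1 \cdots a_n$, so $HS_0(A)$ is a quotient of $A$; inserting the permutation morphisms shows the kernel of $A \twoheadrightarrow HS_0(A)$ is the $k$-submodule $N$ spanned by all differences $a_0 a_1 \cdots a_n - a_{\pi(0)} a_{\pi(1)} \cdots a_{\pi(n)}$ with $\pi \in \Sigma_{n+1}$. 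I would then check that $N$ is exactly the two-sided ideal $([A,A])$: transposing two adjacent factors changes such a product by an element $x(ab-ba)y$, and (using that $A$ is unital, so $x$ or $y$ may be taken to be $1$) these exhaust the generators of $([A,A])$, while conversely every permutation is a product of adjacent transpositions, giving $N \subseteq ([A,A])$. Hence $HS_0(A) \cong A/([A,A])$.

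Under the hypothesis $([A,A]) = A$ this forces $HS_0(A) = 0$, and the first paragraph then gives $HS_*(A) = 0$ in all degrees. There is no genuine obstacle; the one point that wants care is the equality $N = ([A,A])$, i.e.\ verifying that the colimit relations cut out the whole two-sided \emph{ideal} generated by the commutators and not merely the commutator submodule $[A,A]$. (Alternatively one can skip the explicit coequalizer entirely: $A \to HS_0(A)$ is a ring homomorphism onto a commutative ring, hence factors through the maximal commutative quotient ring $A/([A,A])$, and $([A,A]) = A$ then immediately forces $1 = 0$ in $HS_0(A)$.)
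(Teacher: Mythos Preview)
Your proof is correct and follows essentially the same approach as the paper: show $HS_0(A)=A/([A,A])=0$ and then use the unital Pontryagin product from Theorem~\ref{thm.pontryagin} to kill every $x$ via $x=1\cdot x=0$. The only difference is that you spell out the identification $HS_0(A)\cong A/([A,A])$ via the colimit, whereas the paper simply cites it.
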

\begin{proof}
  $HS_0(A) = A/\left( [A,A] \right)$, so $HS_0(A)$ is trivial.  Now
  for any $x \in HS_q(A)$, we have $x = 1 \cdot x = 0 \cdot x$.
\end{proof}
\begin{remark} 
  It was pointed out in~\cite{A} that symmetric homology fails to
  preserve Morita equivalence.  Corollary~\ref{cor.trivialHS} shows
  the failure in a big way: $HS_*\left(M_n(A)\right)$ is trivial if
  $n > 1$.
\end{remark}

\begin{proposition}\label{prop.product-on-HS_0}
  When restricted to $HS_0(A) \otimes HS_0(A) \to HS_0(A)$, the
  Pontryagin product is the standard algebra multiplication map
  $A/\left( [A,A] \right) \otimes A/\left( [A,A] \right) \to A/\left(
  [A,A] \right)$.
\end{proposition}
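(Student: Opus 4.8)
The plan is to trace the Pontryagin product of Theorem~\ref{thm.pontryagin} through all the identifications made in Section~3 and compute what it does on a generator in homological degree $0$. Recall that $HS_0(A)$ is the $0$-th homology of the simplicial $k$-module $CA_* = \widetilde{\mathcal{F}} \otimes_{\Delta S_+} B^{sym_+}_*A$, and that in simplicial degree $0$ the object $\widetilde{\mathcal{F}}$ consists of (the $k$-span of) the identity chains $[n] \stackrel{\mathrm{id}}{\gets} [n]$, so a degree-$0$, simplicial-degree-$0$ element of $CA_*$ is represented by a class $\mathrm{id}_{[n]} \otimes V$ with $V \in A^{\otimes(n+1)}$. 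The first step is to recall explicitly the known description $HS_0(A) \cong A/([A,A])$: under the identifications, $\mathrm{id}_{[0]} \otimes a$ for $a \in A$ represents the class of $a$, and every $\mathrm{id}_{[n]} \otimes (a_0 \otimes \cdots \otimes a_n)$ reduces in homology to the class of the product $a_0 a_1 \cdots a_n$ (obtained by applying the unique $\Delta_+$ morphism $[n] \to [0]$, using $\Delta S_+$-equivariance and the multiplicativity encoded in~(\ref{eqn.eval_phi_2})), with the cyclic-type relations collapsing to $[A,A]$.

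Next I would unwind the product. By the formula in the proof of Theorem~\ref{thm.pontryagin}, the product of two classes $x = [\mathrm{id}_{[0]} \otimes a]$ and $y = [\mathrm{id}_{[0]} \otimes b]$ is $[\nu(c \otimes (x \otimes y))]$ for a chosen generator $c \in \mathscr{D}_{\mathbf{Mod}}(2) = E_*\Sigma_2$; take $c = \mathrm{id} \in \Sigma_2$ in degree $0$. Tracing through Diagram~(\ref{eqn.passing_to_quotient}), $\nu$ is the quotient image of the map from Lemma~\ref{lem.E_infty-algebra}, which in turn is built from the operad-module structure map $\mu$ of~\S\ref{sub.structure_map} via Proposition~\ref{prop.operad--algebra}. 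In simplicial degree $0$ and with the trivial permutation $\sigma = \mathrm{id}$, the computation~(\ref{eqn.mu(w)1})--(\ref{eqn.mu(w)2}) shows that $\nu$ sends $\mathrm{id} \otimes (\mathrm{id}_{[0]} \otimes a) \otimes (\mathrm{id}_{[0]} \otimes b)$ to $(\mathrm{id}_{\mathbf{p}} \bullet (\mathrm{id}_{[0]} \odot \mathrm{id}_{[0]})) \otimes (a \otimes b)$, and since $\mathrm{id}_{[0]} \odot \mathrm{id}_{[0]} = \mathrm{id}_{[1]}$ is the identity on the concatenation, this is just $\mathrm{id}_{[1]} \otimes (a \otimes b) \in CA_0$.

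The final step is to identify the homology class of $\mathrm{id}_{[1]} \otimes (a \otimes b)$ in $HS_0(A) \cong A/([A,A])$. Applying the $\Delta_+$ morphism $[1] \to [0]$ and the evaluation property~(\ref{eqn.eval_prop}), $\mathrm{id}_{[1]} \otimes (a \otimes b)$ is homologous to $\mathrm{id}_{[0]} \otimes (ab)$, which represents the class of $ab$. Hence the Pontryagin product sends $[a] \otimes [b]$ to $[ab]$, i.e. it is the map induced by algebra multiplication on the quotient $A/([A,A])$; one notes it is well-defined by Corollary~\ref{cor.trivialHS}-style reasoning, namely that $([A,A])$ is closed under left and right multiplication, which makes $A/([A,A])$ a quotient ring. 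The main obstacle is bookkeeping: one must be careful that the generator $c$ of $\mathscr{D}_{\mathbf{Mod}}(2)$ lives in simplicial degree $0$ (so the Barratt--Eccles simplicial direction does not interfere) and that the block-permutation twisting $\sigma_{\mathbf{p}} \bullet (-)$ genuinely degenerates to the identity when $\sigma = \mathrm{id}$ and all blocks have size one; once that is pinned down the rest is a direct substitution into~(\ref{eqn.mu(w)1})--(\ref{eqn.mu(w)2}).
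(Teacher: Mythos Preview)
Your proposal is correct and follows essentially the same approach as the paper: both identify degree-$0$ generators via the collapse $\mathrm{id}_{[n]}\otimes(a_0\otimes\cdots\otimes a_n)\sim \mathrm{id}_{[0]}\otimes(a_0\cdots a_n)$ induced by the $\Delta_+$ morphism $[n]\to[0]$, then read off that the product lands on $\mathrm{id}_{[1]}\otimes(a\otimes b)\sim \mathrm{id}_{[0]}\otimes ab$. The paper compresses this into a one-line remark about $d_1$ collapsing generators via iterated multiplication; you have spelled out the computation of $\nu$ on representatives explicitly, which is the content the paper leaves to the reader. One small quibble: the appeal to ``Corollary~\ref{cor.trivialHS}-style reasoning'' for well-definedness is unnecessary and slightly misplaced---well-definedness of multiplication on $A/([A,A])$ is an elementary fact about two-sided ideals, and in any case well-definedness of the Pontryagin product on $HS_0$ is already guaranteed by Theorem~\ref{thm.pontryagin}.
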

\begin{proof}
  Examine the first few terms of the sequence, $0 \gets CA_0
  \stackrel{d_1}{\gets} CA_1$.  It is straightforward to verify that
  $d_1$ collapses the generators in degree $0$ to those of the form
  $([0] \gets [0]) \otimes a$ via the iterated multiplication map
  $A^{\otimes n} \to A$.
\end{proof}

\subsection{Explicit $HS_0(A)$-module structure of $HS_1(A)$}
\label{sub.module}      

The main result of this subsection is a concrete computation of the
Pontryagin product $HS_0(A) \otimes HS_1(A) \to HS_1(A)$.  We shall
need to induce the $\mathscr{D}_{\mathbf{Mod}}$-algebra structure of
$CA_*$ to the level of complexes in order to transfer the $E_{\infty}$
structure across a chain equivalence.  This step is trivial, as the
``chains'' functor of the Dold-Kan correspondence is lax monoidal.
However, we must remember to use the shuffle map when making
computations at the chain level.  Let $\overline{CA}_{\bullet}$ denote
the Moore complex of $CA_*$.
\begin{lemma}\label{lem.E_infty-algebra_chains}
  The $\mathscr{D}_{\mathbf{Mod}}$-algebra structure on $CA_*$ induces
  a $\mathscr{D}_{\mathbf{Ch}_{\bullet}^{+}}$-algebra structure on
  $\overline{CA}_{\bullet}$.
\end{lemma}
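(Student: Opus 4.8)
The statement to be proved is Lemma~\ref{lem.E_infty-algebra_chains}: the $\mathscr{D}_{\mathbf{Mod}}$-algebra structure on the simplicial $k$-module $CA_*$ transports to a $\mathscr{D}_{\mathbf{Ch}_{\bullet}^{+}}$-algebra structure on the normalized chain complex $\overline{CA}_{\bullet}$. The approach is the standard one: use the fact that the normalization functor $N \colon k\text{-}\mathbf{Mod}^{\Delta^{\mathrm{op}}} \to \mathbf{Ch}^+_{\bullet}$ of the Dold--Kan correspondence is lax symmetric monoidal, with structure given by the (Eilenberg--Zilber) shuffle map $\nabla \colon N(X_*) \otimes N(Y_*) \to N(X_* \otimes Y_*)$, and that $\mathscr{D}_{\mathbf{Ch}_{\bullet}^{+}}$ was \emph{defined} in the excerpt as the Moore complex of $\mathscr{D}_{\mathbf{Mod}}$. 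Since a lax symmetric monoidal functor carries operads to operads and algebras over an operad $\mathscr{P}$ to algebras over the image operad, applying $N$ to the $\mathscr{D}_{\mathbf{Mod}}$-algebra $CA_*$ yields a $N(\mathscr{D}_{\mathbf{Mod}}) = \mathscr{D}_{\mathbf{Ch}_{\bullet}^{+}}$-algebra structure on $N(CA_*) = \overline{CA}_{\bullet}$.

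\textbf{Key steps, in order.} First, record that $N$ is lax symmetric monoidal via the shuffle map $\nabla$ and the evident unit map $k \to N(k[\Delta^0])$; this is classical (see~\cite{GJ}) and the symmetry of $\nabla$ is exactly the Eilenberg--Zilber theorem's compatibility with the twist. Second, observe the general principle: if $G \colon (\mathscr{C},\otimes) \to (\mathscr{D},\otimes)$ is lax symmetric monoidal, then for any operad $\mathscr{P}$ in $\mathscr{C}$ the collection $\{G\mathscr{P}(n)\}$ inherits operad structure maps by composing $G$ applied to the structure maps of $\mathscr{P}$ with the laxity constraints $G\mathscr{P}(m)\otimes\bigotimes_i G\mathscr{P}(j_i) \to G(\mathscr{P}(m)\otimes\bigotimes_i \mathscr{P}(j_i))$; the associativity, unit, and equivariance axioms for $G\mathscr{P}$ follow formally from those for $\mathscr{P}$ together with coherence of the laxity constraints. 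The same composite-with-laxity recipe turns a $\mathscr{P}$-algebra $R$ in $\mathscr{C}$ into a $G\mathscr{P}$-algebra $GR$ in $\mathscr{D}$. Third, instantiate $G = N$, $\mathscr{P} = \mathscr{D}_{\mathbf{Mod}}$, $R = CA_*$: by the definitions in the excerpt, $N\mathscr{D}_{\mathbf{Mod}}(n) = \mathscr{D}_{\mathbf{Ch}_{\bullet}^{+}}(n)$ and $N(CA_*) = \overline{CA}_{\bullet}$, so we obtain exactly the claimed structure. Fourth, remark (for later use in \S\ref{sub.module}) that the algebra product at chain level is the composite of $\nabla$ with the normalized $\nu$, which is why — as the preceding paragraph in the excerpt already warns — shuffles must be inserted when computing with representative cycles.

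\textbf{Main obstacle.} There is no deep obstacle here; the only thing requiring care is that $\mathscr{D}_{\mathbf{Ch}_{\bullet}^{+}}$ is genuinely the operad $N\mathscr{D}_{\mathbf{Mod}}$ and not some other normalization, and that the free $\Sigma_n$-action and levelwise contractibility (hence the $E_\infty$ property) are preserved — the latter because $N$ is an equivalence of categories and preserves the relevant weak equivalences, so each $\mathscr{D}_{\mathbf{Ch}_{\bullet}^{+}}(n)$ is still a free resolution of $k$ over $k[\Sigma_n]$. One should also note that the shuffle map $\nabla$ is only lax (not strong) monoidal, so $\overline{CA}_\bullet$ is an algebra over $\mathscr{D}_{\mathbf{Ch}_{\bullet}^{+}}$ but the structure is no longer strictly induced by a permutative-category product as it was for $\mathcal{T}M$; this is harmless for the existence statement. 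The proof is therefore essentially a citation of the lax-monoidal-functor-preserves-operad-algebras principle applied to normalized chains, and I would keep it to a few sentences.

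\begin{proof}
  The normalization functor $N \colon k\text{-}\mathbf{Mod}^{\Delta^{\mathrm{op}}} \to \mathbf{Ch}_{\bullet}^{+}$ of the Dold--Kan correspondence is lax symmetric monoidal, with structure transformations given by the Eilenberg--Zilber shuffle maps $\nabla \colon N(X_*) \otimes N(Y_*) \to N(X_* \otimes Y_*)$ and the obvious unit $k \to N(\underline{k})$; symmetry of $\nabla$ is part of the Eilenberg--Zilber theorem (see~\cite{GJ}). A lax symmetric monoidal functor sends an operad to an operad and an algebra over that operad to an algebra over the image operad: the structure maps are obtained by composing the original ones with the laxity constraints, and every operad axiom (associativity, unit, equivariance) for the image follows formally from the corresponding axiom for the source together with the coherence of the constraints. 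Applying this with the operad $\mathscr{D}_{\mathbf{Mod}}$ and the algebra $CA_*$ of Lemma~\ref{lem.E_infty-algebra_k-modules}, and using that by definition $N\mathscr{D}_{\mathbf{Mod}}(n) = \mathscr{D}_{\mathbf{Ch}_{\bullet}^{+}}(n)$ while $N(CA_*) = \overline{CA}_{\bullet}$, we obtain a $\mathscr{D}_{\mathbf{Ch}_{\bullet}^{+}}$-algebra structure on $\overline{CA}_{\bullet}$. Since $N$ is an equivalence of categories preserving the relevant weak equivalences and free $\Sigma_n$-actions, each $\mathscr{D}_{\mathbf{Ch}_{\bullet}^{+}}(n)$ remains a free $k[\Sigma_n]$-resolution of $k$, so $\mathscr{D}_{\mathbf{Ch}_{\bullet}^{+}}$ is again $E_{\infty}$. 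Finally, the induced chain-level product is the composite of the shuffle map with the normalized $\nu$, which is why shuffles must be inserted in the explicit computations of \S\ref{sub.module}.
\end{proof}
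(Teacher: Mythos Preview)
Your proof is correct and follows exactly the approach the paper takes: the paper states the lemma without a separate proof, remarking just beforehand that ``this step is trivial, as the `chains' functor of the Dold-Kan correspondence is lax monoidal'' and that ``we must remember to use the shuffle map when making computations at the chain level.'' You have simply spelled out this argument in full, including the general lax-symmetric-monoidal-functor-preserves-operad-algebras principle and the identification $N\mathscr{D}_{\mathbf{Mod}} = \mathscr{D}_{\mathbf{Ch}_{\bullet}^{+}}$, so there is nothing to add.
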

We shall also need some machinery from~\cite{A}, \S\S~10-11.  For each
$n \geq -1$, define the projective $\Delta S_+^{\mathrm{op}}$-module
$(\Delta S_+)_n$ as in~(\ref{eqn.P_n-def}).  The following sequence is
a partial resolution of $\underline{k}$ by projective $\Delta
S_+^{\mathrm{op}}$-modules:
\begin{equation}\label{eqn.partial_resolution_P}
  \begin{tikzcd}
    \underline{k} & \lar[swap]{\epsilon} (\Delta S_+)_0 &
    \lar[swap]{\delta} (\Delta S_+)_2 & \lar[swap]{(\alpha, \beta)}
    (\Delta S_+)_3 \oplus (\Delta S_+)_0,
  \end{tikzcd}
\end{equation}
in which $\epsilon(f) = 1$ for any morphism $f \colon [n] \to [0]$,
$\delta(f) = (x_0x_1x_2)\circ f - (x_2x_1x_0)\circ f$, $\alpha(f) =
(x_0x_1 \otimes x_2 \otimes x_3)\circ f + (x_3 \otimes x_2x_0 \otimes
x_1)\circ f + (x_1x_2x_0 \otimes 1 \otimes x_3)\circ f + (x_3 \otimes
x_1x_2 \otimes x_0) \circ f$, and $\beta(f) = (1 \otimes x_0 \otimes
1)\circ f$.  Thus, there is a small partial chain complex that
computes $HS_i(A)$ for $i = 0, 1$:
\begin{equation}\label{eqn.partial_complex}
  \begin{tikzcd}
    0 & \lar A & \lar[swap]{\partial_1} A^{\otimes 3} &
    \lar[swap]{\partial_2} A^{\otimes 4} \oplus A,
  \end{tikzcd}
\end{equation}
in which
\begin{eqnarray*}
  \partial_1(a \otimes b \otimes c) &=& abc - cba \\ \partial_2(a
  \otimes b \otimes c \otimes d, e) &=& ab \otimes c \otimes d + d
  \otimes ca \otimes b + bca \otimes 1 \otimes d + d \otimes bc
  \otimes a + 1 \otimes e \otimes 1.
\end{eqnarray*}
If $a \in A$, denote by $[a]$ the corresponding element of $HS_0(A)$,
and if $a \otimes b \otimes c \in A^{\otimes 3}$, denote by $[a
  \otimes b \otimes c]$ the corresponding element of $HS_1(A)$.

Our first goal is to set up an explicit equivalence between the
partial complex~(\ref{eqn.partial_complex}) and
$\overline{CA}_{\bullet}$, at least up to degree 1, and then use the
equivalence to give a concrete formula for the product structure.  In
Diagram~(\ref{eqn.diagram_equiv}), the differential $d$ is induced
from the simplicial face maps.  Below, we define and discuss the maps
$F_i$ and $G_i$ for $i=0,1$.
\begin{equation}\label{eqn.diagram_equiv}
  \begin{tikzcd}
    0 & A \lar \dar[bend left]{F_0} & A^{\otimes 3}
    \lar[swap]{\partial_1} \dar[bend left]{F_1}& A^{\otimes 4} \oplus
    A \lar[swap]{\partial_2} \\ 0 & \overline{CA}_0 \lar \uar[bend
      left]{G_0} & \overline{CA}_1 \lar[swap]{d_1} \uar[bend
      left]{G_1} & \overline{CA}_2 \lar[swap]{d_2}
  \end{tikzcd}
\end{equation}
For each $m \geq -1$, let $\pi_m \colon [m] \to [0]$ be the unique
order-perserving $\Delta S_+$ morphism, and $\rho_m \colon [m] \to
[0]$ be the unique order-reversing $\Delta S_+$ morphism.  For
convenience, let $\mathbf{a} = a_0 \otimes \cdots \otimes a_n$ stand
for an arbitrary element of $A^{\otimes(n+1)}$.  We define the maps
$F_0$ and $G_0$ as follows:
\begin{eqnarray}
  F_0(a) &\stackrel{def}{=}& \left([0] \stackrel{\mathrm{id}}{\gets}
  [0]\right) \otimes a\\ G_0\left( \left([m] \stackrel{f}{\gets}
  [n]\right) \otimes \mathbf{a}\right) &\stackrel{def}{=}& \langle
  \pi_{m}f, \mathbf{a} \rangle
\end{eqnarray}
Observe that $G_0F_0(a) = a$.  To show $F_0G_0 \simeq \mathrm{id}$,
define a homotopy map:
\begin{eqnarray}
  h_0 \colon \overline{CA}_0 &\to& \overline{CA}_1 \\ \left([m]
  \stackrel{f}{\gets} [n]\right) \otimes \mathbf{a} &\mapsto&
  \left([0] \stackrel{\pi_m}{\gets} [m] \stackrel{f}{\gets} [n]\right)
  \otimes \mathbf{a}
\end{eqnarray}
Observe,
\begin{eqnarray}
  d_1h_0\left( \left([m] \stackrel{f}{\gets} [n]\right) \otimes
  \mathbf{a} \right) &=& \left( \left([0] \stackrel{\pi_mf}{\gets} [n]
  \right) \otimes \mathbf{a} \right) - \left( \left([m]
  \stackrel{f}{\gets} [n] \right) \otimes \mathbf{a} \right) \\ &=&
  \left( \left([0] \stackrel{\mathrm{id}}{\gets} [0] \right) \otimes
  \langle \pi_mf, \mathbf{a} \rangle \right) - \left( \left([m]
  \stackrel{f}{\gets} [n] \right) \otimes \mathbf{a} \right) \\&=&
  (F_0G_0 - \mathrm{id})\left( \left([m] \stackrel{f}{\gets}
           [n]\right) \otimes \mathbf{a} \right).
\end{eqnarray}
Next, define $F_1$:
\begin{equation}
  F_1(a \otimes b \otimes c) \stackrel{def}{=} \left[\left([0]
    \stackrel{\pi_2}{\gets} [2] \stackrel{\mathrm{id}}{\gets} [2]
    \right) -\left([0] \stackrel{\rho_2}{\gets} [2]
    \stackrel{\mathrm{id}}{\gets} [2] \right)\right] \otimes (a
  \otimes b \otimes c)
\end{equation}
The maps $F_0, F_1$ are compatible with the differentials, as
illustrated by a diagram-chase.
\begin{equation}
  \begin{tikzcd}[column sep=large]
    abc - bca \arrow[mapsto]{dd}{F_0} &
    \arrow[mapsto]{l}[swap]{\partial_1} a \otimes b \otimes c
    \arrow[mapsto]{d}{F_1}\\ & \left[\left([0] \stackrel{\pi_2}{\gets}
      [2] \stackrel{\mathrm{id}}{\gets} [2] \right) -\left([0]
      \stackrel{\rho_2}{\gets} [2] \stackrel{\mathrm{id}}{\gets} [2]
      \right)\right] \otimes (a \otimes b \otimes c)
    \arrow[mapsto]{d}[swap]{d_1} \\ \left( [0]
    \stackrel{\mathrm{id}}{\gets} [0] \right) \otimes (abc - bca)
    \rar[equals]& \left[\left( [0] \stackrel{\pi_2}{\gets} [0] \right)
      - \left( [0] \stackrel{\rho_2}{\gets} [0] \right)\right] \otimes
    (a \otimes b \otimes c)
  \end{tikzcd}
\end{equation}
Defining $G_1$ is a bit trickier.  For each $n \geq 0$, construct a
quiver $\widetilde{\mathscr{G}}_n$ as follows: The vertices of
$\widetilde{\mathscr{G}}_n$ are permutations of $\{0, 1, \ldots, n\}$.
The edges of $\widetilde{\mathscr{G}}_n$ are in one-to-one
correspondence with the elements of $\Delta S_+([n], [2])$.  For any
$f \colon [n] \to [m]$, write $f = \left(\phi(f), \gamma(f)\right)$
for the unique $\Delta S_+$ factorization.  Now each $f$ labels an
edge in $\widetilde{\mathscr{G}}_n$ whose source is the permuation
$\gamma(f) = \gamma(\pi_2 f)$ and whose target is $\gamma(\rho_2 f)$.
For example, in $\widetilde{\mathscr{G}}_5$, the morphism $x_3x_1
\otimes x_4 \otimes x_0x_5$ (written in tensor notation) labels an
edge from vertex ``$31405$'' to vertex ``$05431$.''  Let
$\mathscr{G}_{n}$ be a maximal subtree of $\widetilde{\mathscr{G}}_n$.
Note, $\mathscr{G}_n$ is connected, which is a result of the fact that
$k \gets (\Delta S_+)_0([n]) \gets (\Delta S_+)_2([n])$ is exact for
all $n \geq 0$ (see~(\ref{eqn.partial_resolution_P}) and~\cite{A}
Lemma~79).  The purpose of $\mathscr{G}_n$ is to record ways in which
one permutation may be converted to any other by way of block
permutations of no more than three blocks at a time.
\begin{example}
  $\mathscr{G}_2$ may be chosen to be the graph on vertices $01$ and
  $10$ with a single edge $01 \to 10$ labeled by $x_0 \otimes x_1
  \otimes 1$.  See Figures~\ref{diag.G_2} and~\ref{diag.G_3} for
  further examples (for brevity in the diagrams, we may write
  morphisms of $\Delta S_+$ in tensor notation using the symbols $a =
  x_0$, $b=x_1$, $c=x_2$, $d = x_3$, etc.).
\end{example}

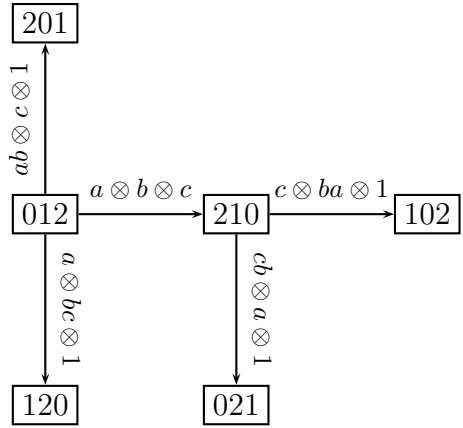
\begin{figure}[!htp]
  \begin{center}
    \psset{unit=1in}
    \begin{pspicture}(3,2)
      \rput(0,1){\rnode{012}{\psframebox{$012$}}}
      \rput(0,2){\rnode{201}{\psframebox{$201$}}}
      \rput(0,0){\rnode{120}{\psframebox{$120$}}}
      \rput(1,1){\rnode{210}{\psframebox{$210$}}}
      \rput(2,1){\rnode{102}{\psframebox{$102$}}}
      \rput(1,0){\rnode{021}{\psframebox{$021$}}}
      \ncline{->}{012}{201} \aput{:U}{{\footnotesize $ab \otimes c
          \otimes 1$}} \ncline{->}{012}{210} \aput{:U}{{\footnotesize
          $a \otimes b \otimes c$}} \ncline{->}{012}{120}
      \aput{:U}{{\footnotesize $a \otimes bc \otimes 1$}}
      \ncline{->}{210}{021} \aput{:U}{{\footnotesize $cb \otimes a
          \otimes 1$}} \ncline{->}{210}{102} \aput{:U}{{\footnotesize
          $c \otimes ba \otimes 1$}}
    \end{pspicture}\caption[$\mathscr{G}_{2}$]{One possible choice 
      of $\mathscr{G}_{2}$} \label{diag.G_2}
  \end{center}
\end{figure}

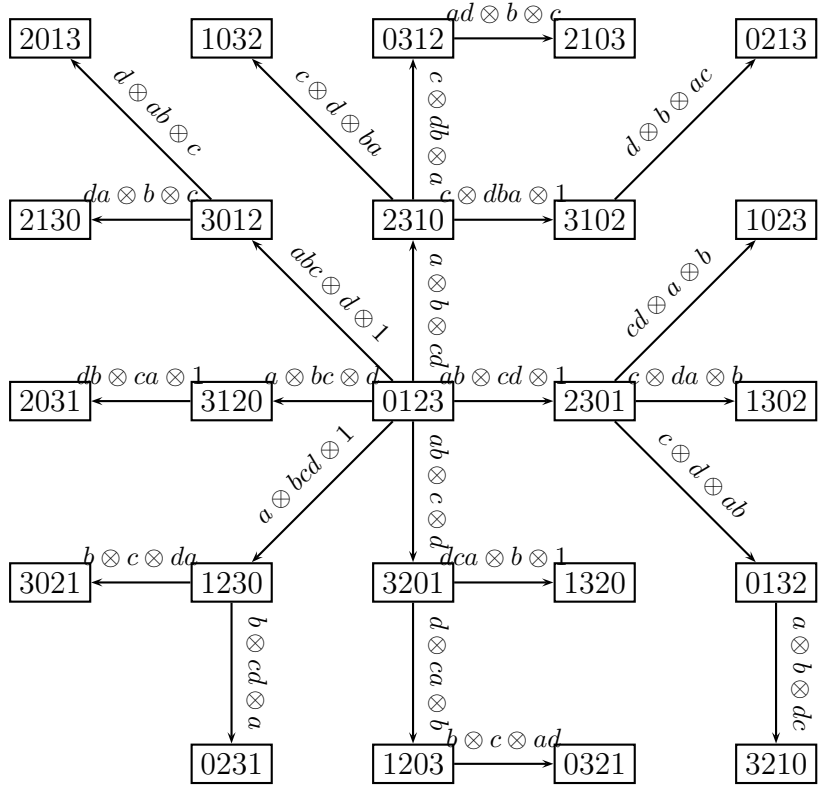
\begin{figure}[!htp]
  \begin{center}
    \psset{unit=0.95in}
    \begin{pspicture}(4,4.1)
      \rput(0,4){\rnode{cabd}{\psframebox{$2013$}}}
      \rput(1,4){\rnode{badc}{\psframebox{$1032$}}}
      \rput(2,4){\rnode{adbc}{\psframebox{$0312$}}}
      \rput(3,4){\rnode{cbad}{\psframebox{$2103$}}}
      \rput(4,4){\rnode{acbd}{\psframebox{$0213$}}}
      \rput(0,3){\rnode{cbda}{\psframebox{$2130$}}}
      \rput(1,3){\rnode{dabc}{\psframebox{$3012$}}}
      \rput(2,3){\rnode{cdba}{\psframebox{$2310$}}}
      \rput(3,3){\rnode{dbac}{\psframebox{$3102$}}}
      \rput(4,3){\rnode{bacd}{\psframebox{$1023$}}}
      \rput(0,2){\rnode{cadb}{\psframebox{$2031$}}}
      \rput(1,2){\rnode{dbca}{\psframebox{$3120$}}}
      \rput(2,2){\rnode{abcd}{\psframebox{$0123$}}}
      \rput(3,2){\rnode{cdab}{\psframebox{$2301$}}}
      \rput(4,2){\rnode{bdac}{\psframebox{$1302$}}}
      \rput(0,1){\rnode{dacb}{\psframebox{$3021$}}}
      \rput(1,1){\rnode{bcda}{\psframebox{$1230$}}}
      \rput(2,1){\rnode{dcab}{\psframebox{$3201$}}}
      \rput(3,1){\rnode{bdca}{\psframebox{$1320$}}}
      \rput(4,1){\rnode{abdc}{\psframebox{$0132$}}}
      \rput(1,0){\rnode{acdb}{\psframebox{$0231$}}}
      \rput(2,0){\rnode{bcad}{\psframebox{$1203$}}}
      \rput(3,0){\rnode{adcb}{\psframebox{$0321$}}}
      \rput(4,0){\rnode{dcba}{\psframebox{$3210$}}}
      \ncline{<-}{bcda}{abcd} \aput{:U}{{\footnotesize $a \otimes bcd
          \otimes 1$}} \ncline{<-}{dbca}{abcd}
      \aput{:U}{{\footnotesize $a \otimes bc \otimes d$}}
      \ncline{->}{abcd}{dcab} \aput{:U}{{\footnotesize $ab \otimes c
          \otimes d$}} \ncline{<-}{cdba}{abcd}
      \aput{:U}{{\footnotesize $a \otimes b \otimes cd$}}
      \ncline{<-}{dabc}{abcd} \aput{:U}{{\footnotesize $abc \otimes d
          \otimes 1$}} \ncline{->}{dcab}{bdca}
      \aput{:U}{{\footnotesize $dca \otimes b \otimes 1$}}
      \ncline{->}{adbc}{cbad} \aput{:U}{{\footnotesize $ad \otimes b
          \otimes c$}} \ncline{<-}{cadb}{dbca}
      \aput{:U}{{\footnotesize $db \otimes ca \otimes 1$}}
      \ncline{->}{cdba}{dbac} \aput{:U}{{\footnotesize $c \otimes dba
          \otimes 1$}} \ncline{<-}{badc}{cdba}
      \aput{:U}{{\footnotesize $c \otimes d \otimes ba$}}
      \ncline{<-}{adbc}{cdba} \aput{:U}{{\footnotesize $c \otimes db
          \otimes a$}} \ncline{->}{dcab}{bcad}
      \aput{:U}{{\footnotesize $d \otimes ca \otimes b$}}
      \ncline{<-}{cabd}{dabc} \aput{:U}{{\footnotesize $d \otimes ab
          \otimes c$}} \ncline{<-}{cbda}{dabc}
      \aput{:U}{{\footnotesize $da \otimes b \otimes c$}}
      \ncline{->}{abcd}{cdab} \aput{:U}{{\footnotesize $ab \otimes cd
          \otimes 1$}} \ncline{->}{bcda}{acdb}
      \aput{:U}{{\footnotesize $b \otimes cd \otimes a$}}
      \ncline{<-}{dacb}{bcda} \aput{:U}{{\footnotesize $b \otimes c
          \otimes da$}} \ncline{->}{cdab}{bacd}
      \aput{:U}{{\footnotesize $cd \otimes a \otimes b$}}
      \ncline{->}{cdab}{bdac} \aput{:U}{{\footnotesize $c \otimes da
          \otimes b$}} \ncline{->}{cdab}{abdc}
      \aput{:U}{{\footnotesize $c \otimes d \otimes ab$}}
      \ncline{->}{abdc}{dcba} \aput{:U}{{\footnotesize $a \otimes b
          \otimes dc$}} \ncline{->}{dbac}{acbd}
      \aput{:U}{{\footnotesize $d \otimes b \otimes ac$}}
      \ncline{->}{bcad}{adcb} \aput{:U}{{\footnotesize $b \otimes c
          \otimes ad$}}
    \end{pspicture}\caption[$\mathscr{G}_{3}$]{One possible choice 
      of $\mathscr{G}_3$} \label{diag.G_3}
  \end{center}
\end{figure}

Consider a typical element $( [p] \stackrel{g}{\gets} [m]
\stackrel{f}{\gets} [n]) \otimes \mathbf{a} \in \overline{CA}_2$.
There is a unique path from $\gamma(gf)$ to $\gamma(f)$ in
$\mathscr{G}_n$.  Let $\mathrm{Path}(gf, f)$ be the set of edge
labels, each taken to be positive or negative depending on the
direction of the arrow as one proceeds from $\gamma(gf)$ to
$\gamma(f)$ in the tree (positive if with the arrow; negative if
against it).  If $\gamma(gf) = \gamma(f)$, then $\mathrm{Path}(gf, f)
= \emptyset$.  Define $G_1 \colon \overline{CA}_1 \to A^{\otimes 3}$
thus:
\begin{equation}
  G_1\left(( [p] \stackrel{g}{\gets} [m] \stackrel{f}{\gets} [n])
  \otimes \mathbf{a} \right) = \sum_{e \in \mathrm{Path}(gf, f)}
  \langle e, \mathbf{a} \rangle.
\end{equation}
Note, the choice of maximal subtree $\mathscr{G}_n$ for each $n$ must
be made once and not changed, as different choices for subtree will
affect the definition of $G_1$.
\begin{example}
  Let $f = 1 \otimes x_2 \otimes x_0 \otimes 1 \otimes x_1 \colon [2]
  \to [4]$ and $g = x_3 \otimes x_2x_0 \otimes 1 \otimes x_1x_4 \colon
      [4] \to [3]$.  Then $gf = 1 \otimes x_0 \otimes 1 \otimes
      x_2x_1$, and $\gamma(gf) = 021$ is the ``start'' node, while
      $\gamma(f) = 201$ is the ``end'' node. We use
      Figure~\ref{diag.G_2} to determine the path.
  \begin{equation}
    G_1\left(( [3] \stackrel{g}{\gets} [4] \stackrel{f}{\gets} [2])
    \otimes (a \otimes b \otimes c) \right) = -cb \otimes a \otimes 1
    - a \otimes b \otimes c + ab \otimes c \otimes 1.
  \end{equation}
\end{example}
The maps $G_0, G_1$ are also compatible with the differentials,
as we verify below:
\begin{equation}
  G_0d_1\left( ( [p] \stackrel{g}{\gets} [m] \stackrel{f}{\gets} [n])
  \otimes \mathbf{a}\right) = \langle \pi_pgf, \mathbf{a} \rangle -
  \langle \pi_m f, \mathbf{a} \rangle.
\end{equation}
\begin{eqnarray}
  \partial_1G_1 \left( ( [p] \stackrel{g}{\gets} [m]
  \stackrel{f}{\gets} [n]) \otimes \mathbf{a}\right) &=& \partial_1
  \left(\sum_{e} \langle e, \mathbf{a}
  \rangle \right)\\ &=& \sum_{e} \left( \langle \pi_2 e, \mathbf{a}
  \rangle - \langle \rho_2 e, \mathbf{a} \rangle \right)
\end{eqnarray}
The sum telescopes so that only the start and end vertices of the path
remain: $\langle \pi_pgf, \mathbf{a} \rangle - \langle \pi_m f,
\mathbf{a} \rangle$.

Using $\mathscr{G}_2$ as in Figure~\ref{diag.G_2}, we find that
$G_1F_1 = \mathrm{id}$.  The verification is provided below (here,
$\mathbf{a} = a\otimes b \otimes c$, and observe that $\gamma(\rho_2)
=$ ``$210$'' in Figure~\ref{diag.G_2}, so $\mathrm{Path}(\rho_2,
\mathrm{id}) = \{ -(a\otimes b \otimes c) \}$).
\begin{eqnarray}
  G_1F_1(\mathbf{a}) &=& G_1\left( \left( [0] \stackrel{\pi_2}{\gets}
  [2] \stackrel{\mathrm{id}}{\gets} [2]\right) \otimes
  \mathbf{a}\right) - G_1\left( \left( [0] \stackrel{\rho_2}{\gets}
         [2] \stackrel{\mathrm{id}}{\gets} [2]\right) \otimes
         \mathbf{a}\right)\\ &=& \sum_{e \in \mathrm{Path}(\pi_2,
           \mathrm{id})} \langle e, \mathbf{a} \rangle - \sum_{e \in
           \mathrm{Path}(\rho_2, \mathrm{id})} \langle e, \mathbf{a}
         \rangle \\ &=& 0 - \left( - (a \otimes b \otimes c) \right)
         \\ &=& \mathbf{a}.
\end{eqnarray}
Finally, we set up a homotopy map $h_1 \colon \overline{CA}_1 \to
\overline{CA}_2$ to show that $F_1G_1 \simeq \mathrm{id}$.
\begin{equation}\label{eqn.homotopy}
  \begin{tikzcd}[column sep=large]
      \overline{CA}_0 \rar[bend left]{h_0} & \overline{CA}_1
      \arrow[loop above]{}[name=F1G1]{F_1G_1} \lar[swap]{d_1}
      \rar[bend left]{h_1} & \overline{CA}_2 \lar[swap]{d_2}
  \end{tikzcd}
\end{equation}

At this point, it is helpful to use an abbreviated notation for
$i$-chains of $\overline{CA}_{\bullet}$:
\begin{equation}\label{eqn.abbrev}
  ( g_i, \ldots, g_1, f, \mathbf{a}) \stackrel{def}{=} ( [m_i]
  \stackrel{g_i}{\gets} \cdots \stackrel{g_1}{\gets} [m_1]
  \stackrel{f}{\gets} [n]) \otimes \mathbf{a}.
\end{equation}
Of course, $\Delta S_+$-equivariance still applies; in particular, the
element~(\ref{eqn.abbrev}) is equal to $(g_i, \ldots, g_1,
\mathrm{id}, \langle f, \mathbf{a} \rangle)$.  Let $(g, f, \mathbf{a})
\in \overline{CA}_1$ be as above, that is, $[p] \stackrel{g}{\gets}
    [m] \stackrel{f}{\gets} [n]$ is a sequence of $\Delta S_+$
    morphisms.
\begin{eqnarray}
  \label{eqn.F_1G_1(1)}
  F_1G_1(g, f, \mathbf{a}) &=& \sum_{e \in \mathrm{Path}(gf, f)}
  \left[ \left(\pi_2, \mathrm{id}, \langle e, \mathbf{a} \rangle\right)
    - \left(\rho_2, \mathrm{id}, \langle e, \mathbf{a} \rangle\right)
    \right] \\ 
    \label{eqn.F_1G_1(2)}
  &=& \sum_{e \in \mathrm{Path}(gf, f)} \left[ \left(\pi_2,
    e, \mathbf{a} \right) - \left(\rho_2, e, \mathbf{a}\right) \right].
\end{eqnarray}
\begin{equation}\label{eqn.h_0d_1}
  h_0d_1(g, f, \mathbf{a}) = (\pi_p, gf, \mathbf{a}) - (\pi_m, f,
  \mathbf{a}).
\end{equation}
We define $h_1$ by:
\begin{equation}\label{eqn.h_1}
  h_1 \colon (g, f, \mathbf{a}) \mapsto (\pi_m, f, \mathrm{id},
  \mathbf{a}) - (\pi_pg, f, \mathrm{id}, \mathbf{a}) - (\pi_p, g, f,
  \mathbf{a}) + \sum_{e \in \mathrm{Path}(gf, f)} \left[ \left(\pi_2,
    e, \mathrm{id}, \mathbf{a} \right) - \left(\rho_2, e, \mathrm{id},
    \mathbf{a} \right)\right].
\end{equation}
Then a tedious but straighforward calculation shows that $F_1G_1 -
\mathrm{id} = d_2h_1 + h_0d_1$.  Some details are shown below, as
$d_2$ is applied to the various terms that comprise the right hand
side of~(\ref{eqn.h_1}).
\begin{eqnarray}
  \label{eqn.d_2h_1(1)}
  d_2 \colon (\pi_m, f, \mathrm{id}, \mathbf{a}) &\mapsto& (\pi_m, f,
  \mathbf{a}) - (\pi_mf, \mathrm{id}, \mathbf{a}) + (f, \mathrm{id},
  \mathbf{a}). \\
  \label{eqn.d_2h_1(2)}
  d_2 \colon (\pi_pg, f, \mathrm{id}, \mathbf{a}) &\mapsto& (\pi_pg,
  f, \mathbf{a}) - (\pi_pgf, \mathrm{id}, \mathbf{a}) + (f,
  \mathrm{id}, \mathbf{a}). \\
  \label{eqn.d_2h_1(3)}
  d_2 \colon (\pi_p, g, f, \mathbf{a}) &\mapsto& (\pi_p, gf,
  \mathbf{a}) - (\pi_pg, f, \mathbf{a}) + (g, f, \mathbf{a}). \\
  \label{eqn.d_2h_1(4)}
  d_2 \colon (\pi_2, e, \mathrm{id}, \mathbf{a}) &\mapsto& (\pi_2, e,
  \mathbf{a}) - (\pi_2e, \mathrm{id}, \mathbf{a}) + (e, \mathrm{id},
  \mathbf{a}).\\
  \label{eqn.d_2h_1(5)}
  d_2 \colon (\rho_2, e, \mathrm{id}, \mathbf{a}) &\mapsto& (\rho_2,
  e, \mathbf{a}) - (\rho_2e, \mathrm{id}, \mathbf{a}) + (e,
  \mathrm{id}, \mathbf{a}).
\end{eqnarray}
In view of~(\ref{eqn.h_1}) and
(\ref{eqn.d_2h_1(1)})--(\ref{eqn.d_2h_1(5)}), and after many
cancellations,
\begin{equation}
  d_2h_1(g, f, \mathbf{a}) = (\pi_m, f, \mathbf{a}) - (\pi_p, gf,
  \mathbf{a}) - (g, f, \mathbf{a}) + \sum_{e} \left[ (\pi_2, e,
    \mathbf{a}) - (\rho_2, e, \mathbf{a})\right],
\end{equation}
which is the same as (cf.~Eqns.~(\ref{eqn.F_1G_1(2)})
and~(\ref{eqn.h_0d_1})):
\begin{equation}
   (-h_0d_1 - \mathrm{id} + F_1G_1)(g, f, \mathbf{a}).
\end{equation}

\begin{proposition}\label{prop.module-structure}
  For a unital associative algebra $A$ over commutative ground ring
  $k$, $HS_1(A)$ is a left $HS_0(A)$--module, via
  \begin{equation}
    [a] \bullet [b \otimes c \otimes d] = [ab \otimes c \otimes d] -
    [b \otimes ca \otimes d] + [b \otimes c \otimes ad].
  \end{equation}
  Moreover, there is a right module structure,
  \begin{equation}
    [b\otimes c \otimes d] \bullet [a] = [ba \otimes c \otimes d] - [b
      \otimes ac \otimes d] + [b \otimes c \otimes da],
  \end{equation}
  and the two actions agree in the sense that $[a] \bullet [b\otimes c
    \otimes d] = [b\otimes c \otimes d] \bullet [a]$.
\end{proposition}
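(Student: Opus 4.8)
The plan is to transport the Pontryagin product of Theorem~\ref{thm.pontryagin} across the explicit chain equivalence furnished by the maps $F_i,G_i$ of Diagram~(\ref{eqn.diagram_equiv}).  Since $HS_*(A)$ is an associative algebra, that product restricts to a map $HS_0(A)\otimes HS_1(A)\to HS_1(A)$ which makes $HS_1(A)$ an $HS_0(A)$-module; only the explicit formula is in question.  Because $G_iF_i=\mathrm{id}$, $F_iG_i\simeq\mathrm{id}$ for $i=0,1$, and $G_1$ intertwines the differentials ($\partial_1G_1=G_0d_1$), for $a\in A$ and any $\partial_1$-cycle $b\otimes c\otimes d\in A^{\otimes 3}$ one has
\begin{equation*}
  [a]\bullet[b\otimes c\otimes d]=\big[\,G_1\big(F_0(a)\cdot F_1(b\otimes c\otimes d)\big)\big],
\end{equation*}
where $\cdot$ denotes the chain-level Pontryagin product on $\overline{CA}_{\bullet}$ from Lemma~\ref{lem.E_infty-algebra_chains}.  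Thus it suffices to evaluate the right-hand side.

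Since $F_0(a)$ lives in chain-degree $0$ and $F_1(b\otimes c\otimes d)$ in chain-degree $1$, the product $F_0(a)\cdot F_1(b\otimes c\otimes d)$ is computed through the Eilenberg--Zilber shuffle map that makes normalized chains lax monoidal; concretely it takes the form $\nu_1\big(c_1\otimes(s_0F_0(a)\otimes F_1(b\otimes c\otimes d))\big)$, with $\nu_1$ the degree-$1$ component of the structure map of Lemma~\ref{lem.E_infty-algebra_k-modules} and $c_1$ the degeneracy of the generator $c=\mathrm{id}\in\Sigma_2$.  Here $s_0F_0(a)=\big([0]\stackrel{\mathrm{id}}{\gets}[0]\stackrel{\mathrm{id}}{\gets}[0]\big)\otimes a$ is a degenerate $1$-simplex, and its product against the non-degenerate chain $F_1(b\otimes c\otimes d)=(\pi_2,\mathrm{id},b\otimes c\otimes d)-(\rho_2,\mathrm{id},b\otimes c\otimes d)$ need not vanish.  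Applying $\nu_1$ term by term --- which, as described in \S\ref{sub.structure_map}, untwists by the relevant block permutation, forms the monoidal products $\mathrm{id}_{[0]}\odot\pi_2$ and $\mathrm{id}_{[0]}\odot\rho_2$ in $\Delta S_+$, and retwists --- produces, after tracking sources and targets, an explicit $1$-cycle $w\in\overline{CA}_1$ whose two summands involve (in tensor notation) these two $\Delta S_+$-morphisms.

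Finally we apply $G_1$: for each summand of $w$ we form the permutations $\gamma(gf)$ and $\gamma(f)$, trace the unique path between them in the single fixed maximal subtree $\mathscr{G}_3$ of Figure~\ref{diag.G_3}, and sum the evaluated edge labels.  The summand built from the order-preserving morphism contributes nothing (its two endpoint permutations coincide), while the other contributes a short sum of evaluated edges; collecting terms and reducing modulo $\mathrm{im}\,\partial_2$ --- for instance, applying $\partial_2$ to elements of the form $1\otimes p\otimes q\otimes r$ to rewrite ``reversed'' three-tensors --- brings the representative into the claimed form $[ab\otimes c\otimes d]-[b\otimes ca\otimes d]+[b\otimes c\otimes ad]$.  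The right-module formula is obtained from the entirely analogous evaluation of $G_1\big(F_1(b\otimes c\otimes d)\cdot F_0(a)\big)$, in which the shuffle map instead degenerates the second factor and the relevant monoidal products are $\pi_2\odot\mathrm{id}_{[0]}$ and $\rho_2\odot\mathrm{id}_{[0]}$; the path computation in $\mathscr{G}_3$ then yields $[ba\otimes c\otimes d]-[b\otimes ac\otimes d]+[b\otimes c\otimes da]$.  The agreement of the two actions is then immediate from graded commutativity (Theorem~\ref{thm.pontryagin}): since $[a]$ has degree $0$ the Koszul sign is $+1$, so $[a]\bullet[b\otimes c\otimes d]=[b\otimes c\otimes d]\bullet[a]$; in particular the two displayed formulas represent one and the same element of $HS_1(A)$.

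The main obstacle is the bookkeeping concentrated in the middle step: handling the shuffle map correctly when multiplying a degenerate chain by a non-degenerate one (so that the product need not be degenerate), tracking the block permutations $\sigma_{\mathbf{p}}$ buried inside $\nu_1$ and $\mu$ together with the ``product in order of increasing indices'' convention governing evaluation of $\Delta S_+$-morphisms, staying consistent with the one fixed choice of spanning tree $\mathscr{G}_n$ on which $G_1$ depends, and carrying out the final reduction modulo $\partial_2$-boundaries that puts the answer in closed form.  Everything else --- that the chains in sight are cycles and that the identifications respect homology classes --- is routine once the equivalence $(F_i,G_i)$ is in hand.
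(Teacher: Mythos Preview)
Your proposal is correct and follows essentially the same route as the paper: map the factors into $\overline{CA}_\bullet$ via $F_*$, apply the operad-algebra structure map $\nu$ (with the requisite shuffle), then read off the result via $G_1$ using the tree $\mathscr{G}_3$, and finally simplify in $HS_1(A)$. Your observation that the order-preserving summand contributes nothing because $\gamma(gf)=\gamma(f)$ matches the paper's computation, and your reduction ``modulo $\mathrm{im}\,\partial_2$'' is exactly the paper's invocation of the \emph{sign relation} from~\cite{A}, \S10. The only stylistic difference is that the paper deduces agreement of the left and right actions by noting their difference is a boundary, whereas you invoke graded commutativity from Theorem~\ref{thm.pontryagin} directly; the latter is arguably cleaner since commutativity is already established.
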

\begin{remark}
  This module structure was first discovered on the chain level before
  the Pontryagin product was discovered.  Below is the explicit
  derivation using Theorem~\ref{thm.pontryagin}.
\end{remark}
\begin{proof}
  Let $w, x, y, z \in A$, so that $w$ represents a $0$-chain and $x
  \otimes y \otimes z$ represents a $1$-chain in the partial
  sequence~(\ref{eqn.partial_complex}) used to compute $HS_*(A)$.
  Consider $\mathrm{id}_{\Sigma_2} \in
  \mathscr{D}_{\mathbf{Ch}_{\bullet}^{+}}(2)$, and let $F_*$ and $G_*$
  be the chain equivalences developed above.  Note, in
  line~(\ref{eqn.product4}), morphisms of $\Delta S_+$ are written in
  tensor notation.
  \begin{eqnarray}
    \label{eqn.product1}
    \lefteqn{\mathrm{id_{\Sigma_2}} \otimes (a) \otimes (b \otimes c
      \otimes d)} \\
    \label{eqn.product2}
    &\stackrel{\mathrm{id} \otimes F_*^{\otimes 2}}{\mapsto}&
    \mathrm{id_{\Sigma_2}} \otimes (\mathrm{id}_{[0]}, a) \otimes
    \left( (\pi_2, \mathrm{id}_{[2]}, b \otimes c \otimes d) -
    (\rho_2, \mathrm{id}_{[2]}, b \otimes c \otimes d)\right) \\
    \label{eqn.product3}
    &\stackrel{\nu}{\mapsto}& \left[(\mathrm{id}_{[0]} \odot \pi_2,
      \mathrm{id}_{[3]}) - (\mathrm{id}_{[0]} \odot \rho_2,
      \mathrm{id}_{[3]})\right] \otimes (a \otimes b \otimes c \otimes
    d)\\
    \label{eqn.product4}
    &=& \left[(x_0 \otimes x_1x_2x_3, \mathrm{id}_{[3]}) - (x_0
      \otimes x_3x_2x_1, \mathrm{id}_{[3]}) \right] \otimes (a \otimes
    b \otimes c \otimes d)\\
    \label{eqn.product5}
    &\stackrel{G_1}{\mapsto}& b \otimes c \otimes ad + d \otimes ca
    \otimes b + ab \otimes c \otimes d
  \end{eqnarray}
  Finally, using the {\it sign relation} (see~\cite{A}, \S10), we
  have equality in $HS_1(A)$:
  \begin{equation}
    [b \otimes c \otimes ad] + [d \otimes ca \otimes b] + [ab \otimes
      c \otimes d] = [ab \otimes c \otimes d] - [b \otimes ca \otimes
      d] + [b \otimes c \otimes ad].
  \end{equation}
  The product $HS_1(A) \otimes HS_0(A) \to HS_1(A)$ can be found
  explicitly in a similar manner.  The fact that the two products
  agree follows from the observation that their difference is a
  boundary.
\end{proof}
\begin{remark}
  Theoretically, if the resolution~(\ref{eqn.partial_resolution_P})
  could be extended further, then one could extend the maps $F_i$ and
  $G_i$ to higher degrees in order to study the product structure of
  $HS_*(A)$.  However, this tedious ``nuts-and-bolts'' approach does
  not seem to offer best ratio of payoff in exchange for the work put
  in.
\end{remark}

\subsection{Computed Results}
Using \verb|GAP|, the following explicit computations of the
$HS_0(A)$--module structure on $HS_1(A)$ were made for some
$\Z$-algebras.  Note in each case below, $HS_0(A) = A$ since $A$ is
commutative.

\begin{center}
\begin{tabular}{l|l|l}
  $A$ & $HS_1(A \;|\; \Z)$ & $HS_0(A)$--module structure\\ 
  \hline
  $\Z[t]/(t^2)$ & $\Z/2\Z \oplus \Z/2\Z$ & Generated by $u$ with
  $2u=0$\\ 
  $\Z[t]/(t^3)$ & $\Z/2\Z \oplus \Z/2\Z$ & Generated by $u$
  with $2u=0$ and $t^2u=0$\\ 
  $\Z[t]/(t^4)$ & $(\Z/2\Z)^4$ & Generated
  by $u$ with $2u=0$\\ 
  \hline 
  $\Z[C_2]$ & $\Z/2\Z \oplus \Z/2\Z$ &
  Generated by $u$ with $2u=0$\\ 
  $\Z[C_3]$ & $0$ & \\ 
  $\Z[C_4]$ & $(\Z/2\Z)^4$ & Generated by $u$ with $2u=0$\\ 
  $\Z[C_5]$ & $0$ &
  \\ \hline
\end{tabular}
\end{center}


\end{document}